\newtheorem{theorem}{Theorem}[section]
\newtheorem{lemma}[theorem]{Lemma}
\theoremstyle{definition}
\newtheorem{definition}[theorem]{Definition}
\newtheorem{proposition}[theorem]{Proposition}
\newtheorem{corollary}[theorem]{Corollary}
\theoremstyle{remark}
\newtheorem{remark}[theorem]{Remark}
\def\N{{\mathbb N}} 
\def\Z{{\mathbb Z}}
\def\R{{\mathbb R}}
\def\C{{\mathbb C}}
\def\Im{\mathrm{Im}\,}
\def\Re{\mathrm{Re}\,}
\def\supp{\mathrm{supp}\,}
\def\Span{\mathrm{span}\,}
\def\skpl{\langle}      
\def\skpr{\rangle}      
\newcommand{\mcF}{\mathcal{F}}
\newcommand{\mcD}{\mathcal{D}}
\newcommand{\st}{\,:\,}
\newcommand{\be}{\begin{equation}}
\newcommand{\ee}{\end{equation}}
\newcommand{\bfa}{\boldsymbol{a}}
\newcommand{\bfz}{\boldsymbol{z}}
\newcommand{\Arg}{{\mathrm{Arg\,}}}
\newcommand{\loc}{{\mathrm{loc}}}
\newcommand{\inn}[2]{{\langle #1,#2 \rangle}}
\begin{document}

\title{Exponential Splines of Complex Order}

\author{Peter Massopust}
\address{Institute of Biomathematics and Biometry, Helmholtz Zentrum M\"unchen\\ Ingolst\"adter Landstrasse 1\\ 85764 Neuherberg, Germany and Zentrum Mathematik\\Lehrstuhl M6\\ Technische Universtit\"at M\"unchen \\ 
Boltzmannstrasse 3\\ 85747 Garching b. M\"unchen\\ Germany}
\email{massopust@ma.tum.de; peter.massopust@helmholtz-muenchen.de}
%

\subjclass{Primary 41A15, 65D07; Secondary 26A33, 46F25}


\keywords{Polynomial B-splines, exponential splines, exponential B-splines, complex B-splines, complex exponential B-splines, scaling function, multiresolution analysis, Riesz basis, wavelet, Lizorkin space, fractional differential and integral operator}
\begin{abstract}
We extend the concept of exponential B-spline to complex orders. This extension contains as special cases the class of exponential splines and also the class of polynomial B-splines of complex order. We derive a time domain representation of a complex exponential B-spline depending on a single parameter and establish a connection to fractional differential operators defined on Lizorkin spaces. Moreover, we prove that complex exponential splines give rise to multiresolution analyses of $L^2(\R)$ and define wavelet bases for $L^2(\R)$.
\end{abstract}
\maketitle
\section{Preliminaries on Exponential Splines}\label{sec2}
Exponential splines are used to model phenomena that follow differential 
systems of the form $\dot{x} = A x$, where $A$ is a constant matrix. For 
such equations the coordinates of the the solutions are linear 
combinations of functions of the type $e^{a x}$ and $x^n e^{a x}$, $a\in \R$. In 
approximation theory exponential splines are modeling data that exhibit 
sudden growth or decay and for which polynomials are ill-suited because 
of their oscillatory behavior. Some of the mathematical issues regarding 
exponential splines in the theory of interpolation and approximation can 
be found in the following references: 
\cite{ammar,dm1,dm2,mccartin,sakai1, sakai2, spaeth,unserblu05,zoppou}.

Another approach to exponential splines is based on certain classes of
linear differential operators with constant coefficients. The original
ideas of such an approach can be found in, for instance, \cite{mic,unserblu05}
and in exposition in \cite{masso}. The classical
polynomial splines $s$ of order $n$, $n\in \N$, can be interpreted as 
(distributional) solutions to equations of the form
\begin{equation}\label{poly}
D^n s = \sum_{\ell =1}^n c_\ell\,\delta (\cdot - \ell), \quad c_\ell\in \R.
\end{equation}
where $D$ denotes the (distributional) derivative and $\delta$ the
Dirac delta distribution. A well known class of splines of central 
importance is the class of  polynomial B-splines $B_n$, defined as the 
$n$-fold convolution of  the characteristic function of the unit interval. Polynomial B-splines satisfy the above equation and they lay the foundations for further generalizations.

Equation \eqref{poly} is a special case of the more general expression
\begin{equation}\label{exp}
L^n f := \prod_{j=1}^n (D + a_j I) f = \sum_{\ell=1}^n c_\ell\,\delta (\cdot - \ell), \quad a_j, c_\ell\in \R.
\end{equation}
where $I$ denotes the identity operator. Solutions to (\ref{exp}) are then called {\em exponential splines} and they reduce to polynomial splines in case all $a_j = 0$. For later purposes, we record a particular identity involving a special case of \eqref{exp}: If, for all $j\in\{1, \ldots, n\}$, $a_j =: a\in \R$, then
\be\label{important}
(D + aI)^n (e^{-a(\bullet)}f) = e^{-a(\bullet)} D^n f,\qquad n\in \N.
\ee
One can {\em define} the differential operator on the left-hand side in this manner and show that this definition is equivalent to the usual definition involving the binomial theorem for linear differential operators with constant coefficients:
\[
(D + aI)^n = \sum_{k=0}^n \binom{n}{k} a^k D^{n-k}.
\]
However, for our later purposes of replacing the integer $n$ by a complex number $z$, such a {\em finite} expression is not available and we need to resort to \eqref{important} as the basic identity.

In this paper, we extend the concept of exponential spline to include complex orders in the defining equation \eqref{exp}. For this purpose, we need to extend the differential operator $L^n$ to a fractional differential operator $\mathcal{L}^z$ of complex order $z$ defined on an appropriate function space. We obtain the generalization of exponential splines to complex order via exponential B-splines. To this end, we briefly review polynomial and exponential B-splines, and revisit the definition of polynomial B-splines of complex order.  The former is done in Section \ref{sec2} and the latter in Section \ref{sec4}. In Section \ref{sec4}, we also introduce the fractional derivative operators and function spaces needed for the generalization. Exponential splines of complex order depending on one parameter are then defined in Section \ref{sec5}. For this purpose, we first introduce exponential B-splines of complex order, for short complex exponential B-splines, in the Fourier domain and discuss some of their properties. In particular, we derive a time domain representation and show that this new class of splines defines multiresolution analyses of and wavelet bases for $L^2(\R)$. At this point, we also establish the connection to fractional differential operators of complex order defined on Lizorkin spaces. A brief discussion of how to incorporate more than one parameter into the definition of a complex exponential B-spline and the derivation of an explicit formula in the time domain for complex exponential B-splines depending on two parameters concludes this section. The last section summarizes the results and describes future work. 
\section{Brief Review of Polynomial and Exponential B-Splines}\label{sec2}
Based on the interpretation (\ref{exp}) one defines, analogously to the introduction of B-splines, exponential B-splines as convolution products of exponential functions $e^{a (\cdot)}$ restricted to $[0,1]$. In this section, we briefly review the definitions of polynomial and exponential B-splines. For more details, we refer the interested reader to the vast literature on spline theory.
\subsection{Polynomial B-Splines}
Let $n\in \N$. The $n$th order classical Curry-Schoenberg (polynomial) B-splines \cite{curryschoenberg} are defined as the $n$-fold convolution product of the characteristic function $\chi = \chi_{[0,1]}$ of the unit interval:
\[
B_{n} := \underset{j = 1}{\overset{n}{*}}\chi.
\]
Equivalently, one may define $B_n$ in the Fourier domain as
\[
\mathcal{F}(B_n)(\omega ) =:  \widehat{B_n} (\omega) := \int_{\R} B_n(x) \,e^{-i\omega x}\, dx = \left( \frac{1-e^{-i\omega}}{i\omega}\right)^{\,n},
\]
where $\mcF$ denotes the Fourier--Plancherel transform on $L^2(\R)$.

Polynomial B-splines generate a {\em discrete} family of approximation/interpolation functions with increasing smoothness:
\[
B_n \in C^{\,n-2}, \quad n\in \N.
\]
and possess a natural multiscale structure via knot insertions. (Here, $C^{-1}$ is interpreted as the space of piece-wise continuous functions.) In addition, polynomial B-splines generate approximation spaces and satisfy several recursion relations that allow fast and efficient computations within these spaces.
\subsection{Exponential B-Splines}
Exponential B-splines of order $n\in \N$ are defined as $n$-fold
convolution products of exponential functions of the form $e^{a (\cdot)}$ restricted
to the interval $[0,1].$ More precisely, let $n\in \N$ and $\boldsymbol{a}:=(a_1, \ldots, a_n)\in \R^n$, with at least one $a_j\neq 0$. Then the {\em exponential B-splines of order $n$ for the $n$-tuple of parameters $\bfa$} is given by
\be\label{regE}
E_n^{\bfa} :=  \underset{j = 1}{\overset{n}{*}} \left(e^{a_j (\bullet)}\chi\right).
\ee
This class of splines shares several properties with the classical polynomial B-splines, but there are also significant differences that makes them useful for different purposes. In \cite{CM}, a useful explicit formula for
these functions was derived and those cases characterized for which the integer translates of an exponential B-spline form a
partition of unity up to a multiplicative factor, i.e.,
\[
\sum_{k\in \Z} {E}^{\bfa}_n(x-k)=C, \ x\in \R,
\] 
for some constant $C.$ 

Moreover, series expansions for functions in  $L^2(\R)$ in terms of shifted and modulated versions of exponential B-splines were derived, and dual pairs of Gabor frames based on exponential B-splines constructed. We note that exponential B-splines also have been used to construct multiresolution analyses and obtain wavelet expansions. (See, for instance,  \cite{unserblu05,LY}.) In addition, it is shown in \cite{CS} that exponential splines play an important role in setting up a one-to-one correspondence between dual pairs of Gabor frames and dual pairs of wavelet frames.
\section{Polynomial Splines of Complex Order}\label{sec4}
Now, we like to extend the concept of cardinal polynomial B-splines to orders other than $n\in\N$. Such an extension to real orders was investigated in \cite{unserblu00,zheludev}. In \cite{unserblu05}, these splines were named {\em fractional B-splines}. Their extension to complex orders was undertaken in \cite{forster06}. The resulting class of cardinal B-splines of complex order or, for short, {\em complex B-splines}, $B_z: \R \to\C$ are defined in the Fourier domain by
\begin{equation}
\mathcal{F}(B_z)(\omega ) =:  \widehat{B_z} (\omega) := \int_{\R} B_z(t)e^{-i\omega t}\, dt := \left( \frac{1-e^{-i\omega}}{i\omega}\right)^z,
\label{eq Definition Fourier B-Spline}
\end{equation}
for $z\in \C_{>1} := \{\zeta\in \C\st \Re \zeta >1\}$. At the origin, there exists a continuous continuation satisfying $\widehat{B_z}(0)=1$. Note that as $\{\frac{1-e^{-i\omega}}{i\omega}\mid \omega\in \R\} \cap \{y \in\R \mid y<0\} = \emptyset$, complex B-splines reside on the main branch of the complex logarithm and are thus well-defined. 

The motivation behind the definition of complex B-splines is the need for a single-band frequency analysis. For some applications, e.g., for phase retrieval tasks, complex-valued analysis bases are needed since real-valued bases can only provide a symmetric spectrum. Complex B-splines combine the advantages of spline approximation with an approximate one-sided frequency analysis. In fact, the spectrum $|\widehat B_{z}(\omega)| $ has the following form. Let
\be\label{Omega}
\Omega(\omega) := \frac{1-e^{-i\omega}}{i \omega}.
\ee
Then the spectrum consists of the spectrum of a real-valued B-spline, combined with a modulating and a damping factor:
$$
|\widehat{B_z}(\omega)| = |\widehat{B_{\Re z}}(\omega)| e^{-i \Im z \ln |\Omega (\omega)|} e^{\Im z \arg \Omega(\omega)}.
$$
The presence of the imaginary part $\Im z$ causes the frequency components on the negative and positive real axis to be  enhanced with different signs. This has the effect of shifting the frequency spectrum towards the negative or positive frequency side, depending on the sign of $\Im z$. The corresponding bases can be interpreted as approximate single-band filters \cite{forster06}.

For the purposes of this article, we summarize some of the most important properties of complex B-splines. Complex B-splines have a time-domain representation of the form
\be\label{cb}
B_z(x) = \frac{1}{\Gamma(z)} \sum_{k= 0}^\infty (-1)^k \left( {z} \atop {k}\right) (x-k)_+^{z-1},
\ee
where the equality holds point-wise for all $x\in\R$ and in the $L^2(\R)$--norm \cite{forster06}. Here, the complex valued binomial is defined by
\[
{z \choose k} := \frac{\Gamma (z+1)}{\Gamma (k+1) \Gamma (z - k +1)},
\]
where $\Gamma :\C\setminus\Z_0^-\to \C$ denotes the Euler Gamma function,
and
\[
x_+^z := \begin{cases}
x^z = e^{z \ln x},      & x > 0;\\
    0,  &  x \leq 0,
\end{cases}
\]
is the complex-valued truncated power function. Formula \eqref{cb} can be verified by Fourier inversion of \eqref{eq Definition Fourier B-Spline}.

Equation \eqref{cb} shows that $B_z$ is a piecewise polynomial of complex degree $z-1$ and that its support is, in general, not compact. It was shown in \cite{forster06} that $B_z$ belongs to the Sobolev spaces $W^s(L^2 (\R))$ for $\Re z > s + \frac{1}{2}$ (with respect to the $L^2$-norm and with weight $(1+|x|^2)^s$). The smoothness of the Fourier transform of $B_z$ implies fast decay in the time domain:
$$
B_z (x) \in \mathcal{O}(x^{-m}), \quad \mbox{for $m < \Re z +1$, as $\ |x| \to \infty$}. 
$$
If $z, z_1, z_2\in \C_{>1}$, then the convolution relation 
$$
B_{z_1} \ast B_{z_2} = B_{z_1+z_2}
$$
and the recursion relation 
\[
B_z (x) = \frac{x}{z-1}\,B_{z-1}(x) + \frac{z-x}{z-1}\,B_{z-1} (x - 1)
\]
holds. Complex B-splines generate a {\em continuous} family of approximation/interpo\-lation functions in the sense that they are elements of (inhomogenous) H\"older spaces \cite{unserblu00}:
\[
B_{z} \in C^s, \quad{s := \Re z - 1}, \quad z\in \C_{>1}.
\]
In addition, complex B-splines are scaling functions, i.e., they satisfy a two-scale refinement equation, generate multiresolution analyses and wavelets, and relate difference and differential operators. For more details and other properties of this new class of splines, we refer the interested reader to \cite{statisticalencounters,FGMS,FMUe,forstermasso10,brigittesampta,hassip,multivarSplines,spie,masso,Mmoments,petersampta}.

Based on the more general definition \cite{karlin,masso,unserblu05} of polynomial splines $s :[a,b]\to \R$ of order $n$ as the solution of a differential equation of the form 
\[
D^n s = \sum_{\ell=0}^n c_\ell \delta {x - \ell},\quad c_\ell\in \R,
\]
where $D$ denotes the distributional derivative and $\delta_{x_\nu}$ the Dirac distribution at $\ell\in \Z$, one can define a spline of complex order $z$ in a similar manner \cite{forstermasso10}. For this purpose, we denote by $\mathcal{S}(\R)$ the Schwartz space of rapidly decreasing functions on $\R$, and introduce the {\em Lizorkin space}
\[
\Psi := \left\{\psi\in \mathcal{S}(\R)\st D^m \psi ({0}) = 0, \,\forall m\in \N\right\},
\]
and its restriction to the nonnegative real axis:
\[
\Psi_+ := \{f\in \Psi \st \supp f \subseteq [0,\infty)\}.
\]
Let $\C_+ := \{z\in \C\st \Re z > 0\}$ and define a kernel function $K_z:\R\to\C$ by
\[
K_z(x) := \frac{x_+^{z-1}}{\Gamma (z)}.
\]
For an $f\in \Psi_+$, define a fractional derivative operator $\mathcal{D}^z$ of complex order $z$ on $\Psi_+$ by
\be\label{diffz}
\mcD^z f := \underbrace{(D^n f) * K_{n-z}}_{\footnotesize{\textrm{(Caputo)}}} = \underbrace{D^n (f*K_{n-z})}_{\footnotesize{\textrm{(Riemann--Liouville)}}}, \qquad n = \lceil \Re z \rceil.
\ee
where $*$ denotes the convolution on $\Psi_+$. Note that, since we are defining the fractional derivative operator on the Lizorkin space $\Psi_+$, the Caputo and Riemann-Liouville fractional derivatives coincide.

Similarly, a fractional integral operator $\mcD^{-z}$ of complex order $z$ on $\Psi_+$ is defined by
\[
\mcD^{-z} f := f*K_z.
\]
It follows from the definitions of $\mcD^{z}$ and $\mcD^{-z}$ that $f\in \Psi_+$ implies $\mcD^{\pm z} f \in \Psi_+$ and that all derivatives of $\mcD^{\pm z} f$ vanish at $x = 0$. The convolution-based definition of the fractional derivative and integral operator of functions $f\in \Psi_+$ also ensures that $\{\mcD^z \st z\in \C\}$ is a semi-group in the sense that
\begin{equation}
\mcD^{z + \zeta} = \mcD^z \mcD^\zeta = \mcD^\zeta \mcD^z = \mcD^{\zeta +z},
\end{equation}
for all $z,\zeta\in \C$. (See, for instance, \cite{podlubny}.)

For our later purposes, we need to define fractional derivative and integral operators $\mcD^{\pm z}$ on the dual space $\Psi_+^\prime$ of $\Psi_+$. To this end, we may regard the locally integrable function $K_z$, $\Re z > -1$, as an element of $\Psi_+^\prime$ by setting
\[
\skpl K_z, \varphi\skpr = \int_0^\infty K_z (t) \varphi (t) dt, \quad \forall\varphi\in \Psi_+.
\]
Here, $\skpl\bullet, \bullet\skpr$ denotes the canonical pairing between $\Psi_+$ and $\Psi_+^\prime$. In passing, we like to mention that the function $z\mapsto \skpl K_z, \varphi\skpr$, $\varphi\in \Psi_+$, is holomorphic for all $z\in\C\setminus \N_0$.
\begin{remark}
The function $K_z$ may also be defined for general $z\in \C$ via Hadamard's partie finie and represents then a pseudo-function. For more details, we refer the interested reader to \cite{dantray,gelfand}, or \cite{zemanian}.
\end{remark}

Note that for $f,g\in \Psi_+^\prime$ the convolution exists on $\Psi_+^\prime$ and is defined in the usual way \cite{samko} by 
\be\label{eq7}
\skpl f*g, \varphi\skpr :=  \skpl (f\times g) (x,y), \varphi (x+y)\skpr = \skpl f(x), \skpl g(y), \varphi (x+y)\skpr\skpr, \quad \varphi\in \Psi_+.
\ee
The pair $(\Psi_+^\prime, *)$ is a convolution algebra with the Dirac delta distribution $\delta$ as its unit element. Thus, we can extend the operators $\mcD^{\pm z}$ to $\Psi_+^\prime$ in the following way.

Let  $z\in \C_+$, let $\varphi\in \Psi_+$ be a test function and $f\in \Psi_+^\prime$. Then the fractional derivative operator $\mcD^{z}$ on $\Psi_+^\prime$ is defined by
\[
\skpl\mcD^{z} f, \varphi\skpr := \skpl (D^nf)*K_{n-z},\varphi\skpr,\quad n = \lceil\Re z\rceil,
\]
and the fractional integral operator $\mcD^{-z}$ by $\skpl\mcD^{-z} f, \varphi\skpr := \skpl f*K_z,\varphi\skpr$. The semi-group properties (\ref{important}) also hold for $f\in \Psi_+^\prime$. (For a proof, see \cite{podlubny} or \cite{gelfand}.)

In \cite{podlubny, samko,gelfand} it was shown that the $z$-th derivative of a truncated power function is given 
\be\label{dertruncpow}
\mcD^z \left[\frac{(x - k)_+^{z-1}}{\Gamma (z)}\right] = \delta (x - k), \quad k < x \in [0, \infty).
\ee
Thus, by the semi-group properties of $\mcD^z$, one obtains $\mcD^{-z} \delta (\bullet - k) = \frac{(\bullet - k)^{z-1}_+}{\Gamma (z)}.$

Now, we are ready to define a spline of complex order $z$ \cite{forstermasso10}: Let $z\in \C_+$ and let $\{a_k\st k\in \N_0\}\in \ell^\infty (\R)$. A solution of the equation
\be\label{cs}
\mcD^z f = \sum_{k=0}^\infty a_k\, \delta (\bullet - k)
\ee
is called a \textit{spline of complex order $z$}.

It can be shown \cite{forstermasso10} that the complex B-spline
\[
B_z (x) = \frac{1}{\Gamma(z)} \sum_{k=0}^\infty (-1)^k \binom{z}{k} (x-k)_+^{z-1}, \quad z\in \C_{>1}.
\]
is a solution of Equation (\ref{cs}) with
\[
a_k = (-1)^k \binom{z}{k},
\]
and is thus a nontrivial example of a spline of complex order.
\section{Exponential Splines of Complex Order}\label{sec5}
In this section, we to extend the concept of exponential B-spline to incorporate complex orders while maintaining the favorable properties of the classical exponential splines. 
\subsection{Definition and basic properties}
To this end, we take the Fourier transform of an exponential function of the form $e^{ -a x}$, $a\in \R$, and define in complete analogy to \eqref{eq Definition Fourier B-Spline}, an {\em exponential B-spline of complex order $z\in \C_{>1}$ for $a\in \R$} (for short, complex exponential B-spline) in the Fourier domain by
\be\label{expspline}
\widehat{E_z^a} (\omega):=\left(\frac{1-e^{-(a+i\omega)}}{a+i\omega}\right)^z.
\ee
We set
\[
\Omega(\omega, a) := \frac{1-e^{-(a+i\omega)}}{a+i\omega},
\]
and note that trivially $\Omega(\omega,0) = \Omega (\omega)$ and, therefore, $\widehat{E^0_z} = \widehat{B_z}$; see \eqref{Omega}.

The function $\Omega(\bullet, a)$ only well-defined for $a \geq 0$. We may verify this as follows. The real part and imaginary parts of $\Omega(\bullet, a)$ are explicitly given by
\begin{align*}
\Re\Omega(\omega, a) := f(\omega, a) & = \frac{e^{-a} \omega  \sin \omega -e^{-a} a \cos\omega +a}{a^2+\omega ^2},\\
\Im\Omega(\omega, a) := g(\omega, a) & = \frac{a e^{-a} \sin\omega +e^{-a} \omega  \cos\omega -\omega }{a^2+\omega ^2}.
\end{align*}
If $g(\omega,a) = 0$ then $a\sin\omega = e^a\omega - \omega\cos\omega$. Therefore, 
\begin{align*}
f(\omega,a) & = \frac{e^{-a}}{a(a^2+\omega^2)}\left(a \omega \sin\omega - a^2\cos\omega + a^2e^a\right)\\
& = \frac{e^{-a}}{a(a^2+\omega^2)}\left(e^a\omega^2 - \omega^2\cos\omega - a^2\cos\omega + a^2e^a\right)\\
& = \frac{e^{-a}}{a}\left(e^a - \cos\omega\right) \geq 0, \quad\text{only if $a \geq 0$}.
\end{align*}
Thus, the graph of $\Omega(\omega, a)$ does not cross the negative $x$-axis; see Figure \ref{fig1} for examples reflecting the different choices for $a$.
\begin{figure}[h!]
\includegraphics[width = 3cm, height = 3cm]{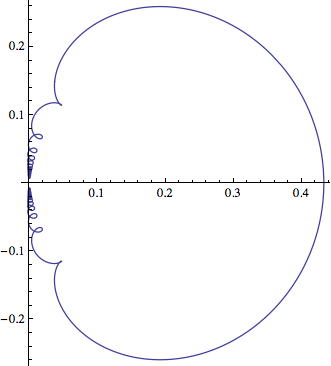}\hspace{2cm}
\includegraphics[width = 3cm, height = 3cm]{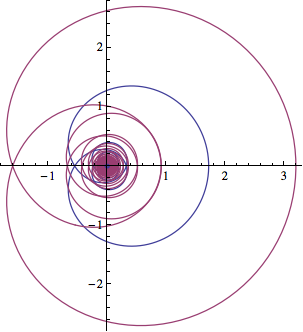}
\caption{The graph of $\Omega(\bullet, a)$: $a = 2$ (left) and $a = -1, -2$ (right).}\label{fig1}
\end{figure}

In particular, this implies that the infinite series
\[
\Omega(\omega,a)^z = \sum_{\ell=0}^\infty \binom{z}{\ell} (-1)^\ell \frac{e^{-(a+ i \omega)\ell}}{(a+i\omega)^z}
\]
converges absolutely for all $\omega\in \R$.
 
As a side result, which is summarized in the following proposition, we obtain the asymptotic behavior of $\Omega(\bullet, a)$ as $a\to\infty$.
\begin{proposition}
The real and imaginary parts of $\Omega(\bullet, a)$ satisfy the identity
\[
\left(f(\omega,a) - \frac{1}{2a}\right)^2 + g(\omega,a)^2 =  \frac{1}{4 a^2} + \frac{e^{-a} \left(-\frac{\omega  \sin (\omega
   )}{a}+e^{-a}-\cos (\omega )\right)}{a^2+\omega
   ^2}, \quad \omega\in \R.
\] 
Furthermore, the curve $\mathcal{C}(a) := \{(f(\omega,a),g(\omega,a)) \st \omega\in \R\}$ approaches the circle
\[
K(a):\quad\left(x - \frac{1}{2a}\right)^2 + y^2 = \frac{1}{4 a^2},
\]
in the sense that
\[
|\mathcal{C}(a) - K(a)| \in e^{-2 a}\mathcal{O}(a^{-2}) + e^{-a}\mathcal{O}(a^{-2}), \quad a \gg 1.
\]
\end{proposition}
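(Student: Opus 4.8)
The plan is to exploit the fact that $f$ and $g$ are by definition the real and imaginary parts of $\Omega(\omega,a)$, so that $f^2+g^2 = |\Omega(\omega,a)|^2$, and this modulus is immediate from the closed form $\Omega(\omega,a) = (1 - e^{-a}e^{-i\omega})/(a+i\omega)$ rather than from the two rational expressions. First I would compute
\[
|\Omega(\omega,a)|^2 = \frac{|1 - e^{-a}e^{-i\omega}|^2}{a^2+\omega^2} = \frac{1 - 2e^{-a}\cos\omega + e^{-2a}}{a^2+\omega^2},
\]
where the numerator collapses because $e^{-2a}\cos^2\omega + e^{-2a}\sin^2\omega = e^{-2a}$. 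This single line replaces the otherwise messy direct expansion of $f^2+g^2$.

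For the identity itself I would expand the left-hand side as
\[
\left(f - \tfrac{1}{2a}\right)^2 + g^2 = (f^2 + g^2) - \frac{f}{a} + \frac{1}{4a^2} = |\Omega|^2 - \frac{f}{a} + \frac{1}{4a^2},
\]
so the $\tfrac{1}{4a^2}$ term matches at once and it remains to check that $|\Omega|^2 - f/a$ equals the residual term on the right. Substituting the stated formula for $f$ and combining over the common denominator $a(a^2+\omega^2)$, the contributions $a\cdot 1$ and $-a$ cancel, the terms $-2ae^{-a}\cos\omega$ and $+ae^{-a}\cos\omega$ combine to $-ae^{-a}\cos\omega$, and after dividing through by $a$ the numerator collapses to exactly $e^{-a}\bigl(-\tfrac{\omega\sin\omega}{a} + e^{-a} - \cos\omega\bigr)$. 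This is a purely algebraic verification with no analytic content.

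For the asymptotic statement I would read the identity as displaying the defect
\[
R(\omega,a) := \left(f - \tfrac{1}{2a}\right)^2 + g^2 - \frac{1}{4a^2} = \frac{e^{-a}\bigl(-\tfrac{\omega\sin\omega}{a} + e^{-a} - \cos\omega\bigr)}{a^2+\omega^2},
\]
which records precisely how far a point of $\mathcal{C}(a)$ fails to satisfy the equation of $K(a)$. The goal is then a bound on $R$ uniform in $\omega$. I would split $R$ into its $e^{-2a}$-part $e^{-2a}/(a^2+\omega^2)$ and its $e^{-a}$-part $e^{-a}\bigl(-\tfrac{\omega\sin\omega}{a} - \cos\omega\bigr)/(a^2+\omega^2)$, and estimate each by means of the two elementary inequalities $1/(a^2+\omega^2)\le 1/a^2$ and $|\omega|/(a^2+\omega^2)\le 1/(2a)$, the latter coming from $a^2+\omega^2\ge 2a|\omega|$. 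Together with $|\sin\omega|,|\cos\omega|\le 1$, these yield $e^{-2a}$-part $\in e^{-2a}\mathcal{O}(a^{-2})$ and $e^{-a}$-part $\in e^{-a}\mathcal{O}(a^{-2})$, which is exactly the asserted estimate.

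The computation carries no genuine obstacle; the two points that need care are the interpretation of $|\mathcal{C}(a) - K(a)|$ as the uniform size of the defect $R$ in the circle equation, rather than a Euclidean or Hausdorff distance (which would pick up an extra factor of $a$ from dividing by $\sqrt{1/(4a^2)+R}+\tfrac{1}{2a}\approx 1/a$), and the uniformity in $\omega$ of the two bounds above, which is what lets the estimate hold simultaneously for every point of the curve.
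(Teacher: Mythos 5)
Your proposal is correct and follows essentially the same route as the paper, which disposes of the identity as a ``straight-forward algebraic verification'' and obtains the asymptotics by expanding (the paper says ``linearization of'') the residual term $e^{-a}\bigl(-\tfrac{\omega\sin\omega}{a}+e^{-a}-\cos\omega\bigr)/(a^2+\omega^2)$; your uniform bounds via $1/(a^2+\omega^2)\le 1/a^2$ and $|\omega|/(a^2+\omega^2)\le 1/(2a)$ supply exactly the details the paper omits. Your closing remark that $|\mathcal{C}(a)-K(a)|$ must be read as the defect in the circle equation rather than a Euclidean or Hausdorff distance is a fair and worthwhile clarification of what the paper leaves implicit.
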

\begin{proof}
The first statement is a straight-forward algebraic verification, and the second statement follows from the linearization of $\frac{e^{-a} \left(-\frac{\omega  \sin (\omega)}{a}+e^{-a}-\cos (\omega )\right)}{a^2+\omega^2}$.
\end{proof}

\begin{remark}
For real $z > 0$, the function $\Omega(\omega)^z$ and its time domain representation were already investigated in \cite{westphal} in connection with fractional powers of operators and later also in \cite{unserblu00} in the context of extending Schoenberg's polynomial splines to real orders. In the former, a proof that this function is in $L^1 (0,\infty)$ was given using arguments from summability theory (cf. Lemma 2 in \cite{westphal}), and in the latter the same result was shown but with a different proof. In addition, it was proved in \cite{unserblu00} that for real $z$, $\Omega(\omega)^z\in L^2 (\R)$ for $z > 1/2$ (using our notation). (Cf. Theorem 3.2 in \cite{unserblu00}.)
\end{remark}

To obtain a relationship between $\Omega (\bullet, a)$ and $\Omega$, we require a lemma whose straightforward proof is omitted.
\begin{lemma}\label{lem5.3}
For all $x\in \R$, we have the following inequalities between $\cos$ and $\cosh$.
\[
\frac{1-\cos x}{x^2} \leq \frac12\leq\frac{\cosh x - 1}{x^2}.
\]
\end{lemma}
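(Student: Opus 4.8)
The plan is to treat the two inequalities separately, and in each case to reduce the claim to a textbook bound on $\sin$ or $\sinh$ via a half-angle identity. I would first note that both quotients extend continuously to $x=0$ with common value $\frac12$, so it suffices to argue for $x\neq0$; the inequalities then become the limiting equality as $x\to0$.

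For the left inequality $\frac{1-\cos x}{x^2}\le\frac12$, I would rewrite the numerator using $1-\cos x = 2\sin^2(x/2)$. The claim becomes $2\sin^2(x/2)\le x^2/2$, i.e. $\sin^2(x/2)\le (x/2)^2$, which is immediate from the elementary bound $|\sin t|\le|t|$ valid for all $t\in\R$ (applied with $t=x/2$). For the right inequality $\frac12\le\frac{\cosh x-1}{x^2}$, I would proceed in exact analogy, using $\cosh x - 1 = 2\sinh^2(x/2)$ together with the companion bound $|\sinh t|\ge|t|$; this follows from the power series $\sinh t = \sum_{k\ge0} t^{2k+1}/(2k+1)!$ having all nonnegative terms for $t\ge0$, combined with oddness. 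One then gets $\cosh x - 1 = 2\sinh^2(x/2)\ge 2(x/2)^2 = x^2/2$, as required.

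There is no genuine obstacle here; the only point deserving a word is the uniform validity over all of $\R$. For large $|x|$ the left-hand quotient tends to $0$ while the right-hand quotient grows without bound, so both bounds are comfortably satisfied, and the half-angle reductions above hold for every real $x$ without any case distinction. Should one prefer to avoid the half-angle identities, an alternative route is to compare Taylor coefficients directly: one has $\cosh x - 1 - x^2/2 = \sum_{k\ge2} x^{2k}/(2k)! \ge 0$ term by term, while $x^2/2 - (1-\cos x) = \sum_{k\ge2} (-1)^k x^{2k}/(2k)!$ can be shown nonnegative by the alternating-series estimate after verifying the terms are eventually monotone. The half-angle argument is cleaner, so I would favor it, which also explains why the author can safely omit the details.
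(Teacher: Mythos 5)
Your proof is correct; the paper itself omits the argument entirely (``a lemma whose straightforward proof is omitted''), so there is nothing to compare against, and your half-angle reduction via $1-\cos x = 2\sin^2(x/2)$, $\cosh x - 1 = 2\sinh^2(x/2)$ together with $|\sin t|\le |t| \le |\sinh t|$ is exactly the kind of clean elementary argument the author had in mind. One small caution: the alternating-series alternative you sketch for the cosine inequality is the weakest link, since the terms $x^{2k}/(2k)!$ are not monotone in $k$ until $2k$ exceeds roughly $|x|$, so that route would need more care than you indicate --- but you rightly designate it as a fallback and rely on the half-angle argument, which is complete as stated.
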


Our next goal is to obtain inequalities relating $\Omega (\bullet, a)$ to $\Omega$. To this end, notice that
\begin{align}\label{eq12}
|\Omega(\omega, a)|^2 &= \left\vert\frac{1-e^{-(a+i\omega)}}{a+i\omega}\right\vert^2 = \frac{2e^{-a}(\cosh a - \cos\omega)}{a^2+\omega^2}.
\end{align}
Employing the statement in Lemma \eqref{lem5.3}, we see that
\begin{align*}
\frac{\cosh a -1}{a^2}\geq\frac12\geq \frac{1-\cos\omega}{\omega^2}, \quad\forall\,a \in\R;\,\forall\,\omega\in \R
\end{align*}
The latter inequality is equivalent to the following expressions:
\begin{align*}
\omega^2 (\cosh a - 1) \geq a^2 & (1 - \cos\omega) \quad\Longleftrightarrow\quad \omega^2 \cosh a - \omega^2 \geq a^2 - a^2\cos\omega\\
& \quad\Longleftrightarrow\quad \omega^2 \cosh a - \omega^2\cos\omega \geq a^2 + \omega^2 - a^2\cos\omega - \omega^2\cos\omega\\
& \quad\Longleftrightarrow\quad \frac{\cosh a - \cos\omega}{a^2+\omega^2} \geq \frac{1 - \cos\omega}{\omega^2}
\end{align*}
Therefore, \eqref{eq12} implies
\begin{equation*}
|\Omega(\omega, a)|^2 = \frac{2e^{-a}(\cosh a - \cos\omega)}{a^2+\omega^2} \geq e^{-a}\frac{2(1 - \cos\omega)}{\omega^2} = e^{-a}\,|\Omega(\omega)|^2.
\end{equation*}

A straightforward computation using again the inequalities in Lemma \ref{lem5.3} shows that
\[
|\Omega(\omega, a)| \leq \frac{1-e^{-a}}{a}, \quad\forall\,a > 0;\,\forall \omega\in \R.
\]
As the right-hand side of the above inequality is bounded above by 1, we obtain an upper bound for $|\Omega(\omega, a)|$ in the form 
\[
|\Omega(\omega, a)| \leq 1 + |\Omega(\omega)|.
\]
These two results are summarized in the next proposition.
\begin{proposition}
For all $a > 0$ and all $\omega\in \R$, the following inequalities hold:
\be
e^{-a/2} |\Omega(\omega)| \leq |\Omega(\omega, a)| \leq 1 + |\Omega(\omega)|.
\ee
\end{proposition}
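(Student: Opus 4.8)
The plan is to read off both inequalities directly from the exact expression \eqref{eq12} for $|\Omega(\omega,a)|^2$ combined with the cosine/cosine-hyperbolic comparison of Lemma \ref{lem5.3}. In fact the derivation preceding the statement already assembles all the needed pieces, so the proof is essentially a matter of collecting them and taking square roots; the only genuinely new observation is that the upper bound reduces to the \emph{same} Lemma as the lower bound.

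For the lower bound I would start from $|\Omega(\omega,a)|^2 = \frac{2e^{-a}(\cosh a - \cos\omega)}{a^2+\omega^2}$ and invoke $\frac{1-\cos\omega}{\omega^2}\le\frac12\le\frac{\cosh a-1}{a^2}$ from Lemma \ref{lem5.3}. Clearing denominators (exactly the chain of equivalences displayed above) gives $\frac{\cosh a-\cos\omega}{a^2+\omega^2}\ge\frac{1-\cos\omega}{\omega^2}$, whence multiplying by $2e^{-a}$ yields $|\Omega(\omega,a)|^2\ge e^{-a}\,|\Omega(\omega)|^2$. Taking positive square roots produces $e^{-a/2}|\Omega(\omega)|\le|\Omega(\omega,a)|$. (At $\omega=0$ the quotients $\frac{1-\cos\omega}{\omega^2}$ and $|\Omega(\omega)|^2$ are read off by their continuous extensions, namely $\tfrac12$ and $1$, so the bound persists there.)

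For the upper bound I would first sharpen to $|\Omega(\omega,a)|\le\frac{1-e^{-a}}{a}$. Using the identities $2e^{-a}\cosh a=1+e^{-2a}$ and $2e^{-a}(\cosh a-1)=(1-e^{-a})^2$, the squared claim $\frac{2e^{-a}(\cosh a-\cos\omega)}{a^2+\omega^2}\le\frac{(1-e^{-a})^2}{a^2}$ simplifies, after cross-multiplying the (positive) denominators and cancelling the common factor $2e^{-a}$, to precisely $a^2(1-\cos\omega)\le\omega^2(\cosh a-1)$, which is once more Lemma \ref{lem5.3}. Since the elementary inequality $1-e^{-a}\le a$ for $a>0$ gives $\frac{1-e^{-a}}{a}\le 1$, I conclude $|\Omega(\omega,a)|\le 1\le 1+|\Omega(\omega)|$.

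The computations are entirely elementary, so there is no real obstacle; the one point worth flagging is the algebraic recognition that the upper estimate is not a new inequality but collapses, via the key identity $2e^{-a}(\cosh a-1)=(1-e^{-a})^2$, to the very comparison of Lemma \ref{lem5.3} already used for the lower bound. The passage from the sharp bound $\frac{1-e^{-a}}{a}$ to the stated $1+|\Omega(\omega)|$ is a deliberate loosening to put both inequalities in a uniform, $\Omega$-comparable form.
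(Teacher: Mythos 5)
Your proposal is correct and follows essentially the same route as the paper: the lower bound comes from the identity $|\Omega(\omega,a)|^2 = \frac{2e^{-a}(\cosh a-\cos\omega)}{a^2+\omega^2}$ combined with Lemma \ref{lem5.3}, and the upper bound from $|\Omega(\omega,a)|\le\frac{1-e^{-a}}{a}\le 1\le 1+|\Omega(\omega)|$. The only difference is that you spell out the ``straightforward computation'' the paper leaves implicit for the intermediate bound $\frac{1-e^{-a}}{a}$ (via $2e^{-a}(\cosh a-1)=(1-e^{-a})^2$), which is a worthwhile clarification but not a different argument.
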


Next, we use the inequalities in the above proposition to establish lower and upper bounds for $\widehat{E_z^a}$ in terms of $\widehat{B_z}$.
\begin{proposition}\label{prop5.4}
For all $z\in \C_{>1}$ and for all $a > 0$, we have that
\be\label{upperbound}
e^{- a \Re z/2 - 2\pi |\Im z|}\,|\widehat{B_z}| \leq |\widehat{E^a_z}| \leq 1 + 2^{\Re z} e^{2\pi |\Im z|}|\widehat{B_z}|.
\ee
\end{proposition}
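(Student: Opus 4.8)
The plan is to reduce everything to the polar (modulus--argument) form of a complex power. Writing $\widehat{E^a_z}(\omega)=\Omega(\omega,a)^z$ and $\widehat{B_z}(\omega)=\Omega(\omega)^z$ on the principal branch of the logarithm, I would use the identity $|w^z|=|w|^{\Re z}\,e^{-\Im z\,\arg w}$, valid for $w$ in the cut plane, to obtain
\[
|\widehat{E^a_z}(\omega)| = |\Omega(\omega,a)|^{\Re z}\,e^{-\Im z\,\arg\Omega(\omega,a)},\qquad |\widehat{B_z}(\omega)| = |\Omega(\omega)|^{\Re z}\,e^{-\Im z\,\arg\Omega(\omega)}.
\]
Since we have already checked that $\Omega(\bullet,a)$ never meets the negative real axis, and the analogous fact for $\Omega$ is recorded in Section~\ref{sec4}, both principal arguments lie in $(-\pi,\pi)$. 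Consequently $|\arg\Omega(\omega,a)|,|\arg\Omega(\omega)|<\pi$ and $|\arg\Omega(\omega,a)-\arg\Omega(\omega)|<2\pi$; these three estimates are exactly what will generate the $e^{\pm 2\pi|\Im z|}$ factors throughout.

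For the lower bound I would work with the quotient. Dividing the two displayed expressions gives
\[
\frac{|\widehat{E^a_z}(\omega)|}{|\widehat{B_z}(\omega)|} = \left(\frac{|\Omega(\omega,a)|}{|\Omega(\omega)|}\right)^{\Re z}\exp\!\bigl(-\Im z\,(\arg\Omega(\omega,a)-\arg\Omega(\omega))\bigr).
\]
By the preceding proposition, $|\Omega(\omega,a)|\ge e^{-a/2}|\Omega(\omega)|$, so the first factor is $\ge e^{-a\Re z/2}$ because $\Re z>0$; the second factor is $\ge e^{-2\pi|\Im z|}$ by the argument estimate above. Multiplying yields the claimed lower constant $e^{-a\Re z/2-2\pi|\Im z|}$, and at the zeros $\omega\in 2\pi\Z\setminus\{0\}$ of $\widehat{B_z}$ the inequality degenerates to the trivial $0\le|\widehat{E^a_z}|$.

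For the upper bound I would instead use the additive estimate $|\Omega(\omega,a)|\le 1+|\Omega(\omega)|$ together with the uniform bound $|\Omega(\omega,a)|\le 1$. Raising $|\Omega(\omega,a)|\le 1+|\Omega(\omega)|$ to the power $\Re z\ge 1$ and applying the convexity (power-mean) inequality $(1+t)^s\le 2^{s-1}(1+t^s)$ separates a pure constant from a term proportional to $|\Omega(\omega)|^{\Re z}$; reinserting the argument factor (via $e^{-\Im z\,\arg\Omega(\omega)}\ge e^{-\pi|\Im z|}$ on the right and $e^{-\Im z\,\arg\Omega(\omega,a)}\le e^{\pi|\Im z|}$ on the left) converts $|\Omega(\omega)|^{\Re z}$ back into $|\widehat{B_z}(\omega)|$ at the cost of the factors $2^{\Re z}$ and $e^{2\pi|\Im z|}$, producing the proportional term $2^{\Re z}e^{2\pi|\Im z|}|\widehat{B_z}(\omega)|$.

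The main obstacle is the additive constant term in the upper bound. Precisely at $\omega\in 2\pi\Z\setminus\{0\}$ one has $\widehat{B_z}(\omega)=0$ while $\widehat{E^a_z}(\omega)\neq 0$, so \emph{no} purely multiplicative comparison $|\widehat{E^a_z}|\le C\,|\widehat{B_z}|$ can hold; the constant term is exactly what absorbs the values of $|\widehat{E^a_z}|$ where $\widehat{B_z}$ vanishes. The delicate step is therefore to verify that the additive constant genuinely dominates $\sup_\omega|\widehat{E^a_z}(\omega)|$ on the zero set of $\widehat{B_z}$; this supremum is controlled by $|\Omega(\omega,a)|\le 1$ and $|\arg\Omega(\omega,a)|<\pi$, which give the uniform estimate $|\widehat{E^a_z}(\omega)|\le e^{\pi|\Im z|}$. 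I would check that the stated additive constant majorises this uniform bound in the near-zero regime, and track the constants carefully there, since this is where the inequality is tightest and where the choice of additive term is forced.
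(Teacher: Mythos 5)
Your argument follows the same route as the paper's proof: both write $|w^z|=|w|^{\Re z}e^{-\Im z\,\Arg w}$, feed in the two-sided modulus comparison $e^{-a/2}|\Omega|\le|\Omega(\bullet,a)|\le 1+|\Omega|$ from the preceding proposition, and pay a factor $e^{2\pi|\Im z|}$ for the difference of the two principal arguments. Your lower-bound derivation is the paper's, rearranged as a quotient, and your upper-bound derivation likewise raises $1+|\Omega|$ to the power $\Re z$ and splits off an additive constant.

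The one step you defer --- checking that the additive constant $1$ actually dominates $|\widehat{E^a_z}|$ on the zero set of $\widehat{B_z}$ --- is exactly where the difficulty lies, and your suspicion is justified: that check fails for the constant as stated. At $\omega=2\pi k$, $k\neq 0$, one has $\widehat{B_z}(\omega)=0$ while $|\widehat{E^a_z}(\omega)|=|\Omega(\omega,a)|^{\Re z}\,e^{-\Im z\,\Arg\Omega(\omega,a)}$; since $\Arg\Omega(2\pi k,a)=-\Arg(a+2\pi i k)$ approaches $\mp\pi/2$ for small $a$, this quantity exceeds $1$ by an arbitrarily large amount once $|\Im z|$ is large (take $\Re z$ near $1$, $a$ small, $|\Im z|$ large), so no additive constant independent of $\Im z$ can work. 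The paper's own proof commits precisely the oversight you were guarding against: in passing from $(1+|\Omega|)^{\Re z}\,e^{-\Im z\,\Arg\Omega(\bullet,a)}$ to $1+\cdots$ it silently replaces $1\cdot e^{-\Im z\,\Arg\Omega(\bullet,a)}$ by $1$. The repair is to let the additive constant be $e^{\pi|\Im z|}$ (or $2^{\Re z-1}e^{\pi|\Im z|}$ if one uses the correct power-mean inequality $(1+t)^s\le 2^{s-1}(1+t^s)$, as you propose; the paper's auxiliary inequality $(1+x)^p\le 1+2^p x^p$ is itself false for small $x>0$ and $p>1$, e.g.\ $p=2$, $x=0.1$). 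With that adjustment both your argument and the paper's go through, and none of the later applications (membership in $L^2$ and Sobolev spaces, Riesz bounds) are affected, since they only use the qualitative two-sided comparison.
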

\begin{proof}
Let $z\in \C_{>1}$ and $a > 0$. Then, the following estimates hold
\begin{align*}
|\widehat{B_z}| &= |\Omega^z| = |\Omega|^{\Re z}\,e^{-\Im z \Arg\Omega} \leq e^{a\Re z /2} |\Omega(\bullet, a)|^{\Re z}\,e^{-\Im z \Arg\Omega}\\
& = e^{a\Re z /2} |\Omega(\bullet, a)|^{\Re z}\,e^{-\Im z \Arg\Omega(\bullet, a)}\,e^{\Im z (\Arg\Omega(\bullet, a)-\Arg\Omega)}\\
& \leq e^{a\Re z /2} |\widehat{E^a_z}|\,e^{2\pi |\Im z|},
\end{align*}
implying the lower bound.
To verify the upper bound, note that
\begin{align*}
|\widehat{E^a_z}| &= |\Omega(\bullet, a)^z| =  |\Omega (\bullet, a)|^{\Re z}\,e^{-\Im z \Arg\Omega(\bullet, a)}\leq (1 + |\Omega|)^{\Re z}\,e^{-\Im z \Arg\Omega(\bullet, a)}\\
& \leq 1 + 2^{\Re z} |\Omega|^{\Re z}e^{-\Im z \Arg\Omega}\,e^{\Im z (\Arg\Omega - \Arg\Omega(\bullet,a))}\\
& \leq 1 + 2^{\Re z}\,e^{2\pi |\Im z|}\, |\widehat{B_z}|.
\end{align*}
Above, we used the fact that $(1+x)^p \leq 1 + 2^p x^p$, for $0\leq x \leq 1$ and $p\geq 1$.
\end{proof}
The upper bound in \eqref{upperbound} together with the arguments employed in \cite[Theorem 3.1]{unserblu00} and \cite[5.1]{forster06} immediately yield the next result.
\begin{proposition}
The complex exponential B-spline $E^a_z$, $a\geq 0$, is an element of $L^2(\R)$ for $\Re z > \frac12$ and of the Sobolev spaces $W^s(L^2(\R))$ for $\Re z > s + \frac12$.
\end{proposition}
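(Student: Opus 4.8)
The plan is to pass to the Fourier domain and use the Plancherel theorem: membership of $E^a_z$ in $L^2(\R)$ is equivalent to $\widehat{E^a_z}\in L^2(\R)$, and membership in the Sobolev space $W^s(L^2(\R))$ is equivalent to $(1+\omega^2)^{s/2}\,\widehat{E^a_z}\in L^2(\R)$. I would then reproduce, for $\widehat{E^a_z}$, the reasoning used for the complex B-spline $B_z$ in \cite[Theorem 3.1]{unserblu00} and \cite[5.1]{forster06}, exploiting that $\widehat{E^a_z}(\omega)=\Omega(\omega,a)^z$ has the same analytic structure as $\widehat{B_z}(\omega)=\Omega(\omega)^z$, with $\Omega$ merely replaced by $\Omega(\bullet,a)$.

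First I would split the frequency axis into a bounded part $\{|\omega|\le 1\}$ and a tail $\{|\omega|>1\}$. On the bounded part, $\Omega(\bullet,a)$ is continuous with finite value $\Omega(0,a)=\frac{1-e^{-a}}{a}$ at the origin, so $\widehat{E^a_z}$ is continuous and bounded there and its contribution to $\int(1+\omega^2)^s|\widehat{E^a_z}(\omega)|^2\,d\omega$ is finite for every $s$. This is exactly the content of the additive constant in the upper bound \eqref{upperbound}: that term records the boundedness of $\widehat{E^a_z}$ near the origin and is irrelevant to the decay at infinity.

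For the tail I would read off the decay from the explicit form of $\Omega$. Since
\[
|\Omega(\omega,a)|=\frac{|1-e^{-(a+i\omega)}|}{|a+i\omega|}\le\frac{1+e^{-a}}{\sqrt{a^2+\omega^2}}=\mathcal{O}\!\left(|\omega|^{-1}\right),
\]
and since $\Omega(\bullet,a)$ stays on the principal branch so that $e^{-\Im z\,\Arg\Omega(\omega,a)}\le e^{\pi|\Im z|}$, one obtains $|\widehat{E^a_z}(\omega)|=|\Omega(\omega,a)|^{\Re z}\,e^{-\Im z\,\Arg\Omega(\omega,a)}=\mathcal{O}(|\omega|^{-\Re z})$. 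Hence there is a constant $C=C(z,a)$ with
\[
\int_{|\omega|>1}(1+\omega^2)^s\,|\widehat{E^a_z}(\omega)|^2\,d\omega\;\le\; C\int_{|\omega|>1}|\omega|^{\,2s-2\Re z}\,d\omega,
\]
and the right-hand side is finite precisely when $2s-2\Re z<-1$, that is, when $\Re z>s+\tfrac12$. Choosing $s=0$ gives $\widehat{E^a_z}\in L^2(\R)$ for $\Re z>\tfrac12$, and the general weight yields the Sobolev statement.

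I expect the decay estimate at infinity to be the only real obstacle. The upper bound \eqref{upperbound} alone does not suffice, since its additive constant is not square-integrable over all of $\R$; it controls boundedness and local behavior but not the tail. The genuinely new input is the rate $|\widehat{E^a_z}(\omega)|=\mathcal{O}(|\omega|^{-\Re z})$, which comes from the numerator of $\Omega(\omega,a)$ being bounded while $|a+i\omega|$ grows linearly in $\omega$---the same mechanism that produces the decay of $\widehat{B_z}$ in the cited references. Once this rate is established, the remaining integrability computations are entirely routine.
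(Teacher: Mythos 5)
Your proof is correct, and it takes a more self-contained route than the paper. The paper's entire argument is one sentence: it invokes the upper bound in \eqref{upperbound}, $|\widehat{E^a_z}| \leq 1 + 2^{\Re z} e^{2\pi |\Im z|}|\widehat{B_z}|$, together with the arguments of \cite[Theorem 3.1]{unserblu00} and \cite[5.1]{forster06}, i.e.\ it reduces everything to the known Sobolev membership of the complex B-spline $B_z$. You instead estimate $\widehat{E^a_z}$ directly: split the frequency axis at $|\omega|=1$, use continuity and boundedness near the origin, and on the tail use $|\Omega(\omega,a)|\le (1+e^{-a})/\sqrt{a^2+\omega^2}=\mathcal{O}(|\omega|^{-1})$ together with boundedness of $e^{-\Im z\,\Arg\Omega(\omega,a)}$ (valid because $\Omega(\cdot,a)$ stays on the principal branch for $a\ge 0$) to get $|\widehat{E^a_z}(\omega)|=\mathcal{O}(|\omega|^{-\Re z})$, from which the integrability condition $\Re z>s+\tfrac12$ falls out. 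Your observation that \eqref{upperbound} by itself cannot do the job is well taken: the additive constant $1$ is not square-integrable on $\R$, so the comparison with $|\widehat{B_z}|$ controls only the local behavior and one still needs a genuine decay rate at infinity --- which is presumably what the paper means by importing ``the arguments employed'' in the cited references, since those references establish exactly the $\mathcal{O}(|\omega|^{-\Re z})$ decay for $\Omega(\omega)^z$ by the same mechanism (bounded numerator over linearly growing denominator). In short, you buy independence from the external results and from the slightly misleading bound \eqref{upperbound}, at the cost of redoing a short and routine computation; the paper buys brevity at the cost of a proof that, read literally, has a gap.
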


We finish this subsection by mentioning that the complex exponential B-spline $E^a_z$ has a frequency spectrum analogous to that for complex B-splines consisting of a modulating and a damping factor:
$$
|\widehat{E^a_{z}}(\omega)| = |\widehat{E^a_{\Re z}}(\omega)| e^{-i \Im z \ln |\Omega (\omega,a)|} e^{\Im z \arg \Omega(\omega,a)}.
$$
Hence, complex exponential splines combine the advantages of exponential splines as described at the beginning of Section \ref{sec2} with those of complex B-splines as mentioned in Section \ref{sec4}.

In Figure \eqref{fig2}, the graphs of $\widehat{E^a_{z}}$ for $z = 2 + k/4 + i$, $k=0,1,2,3,4$, and $a = 1$ are displayed.
\begin{figure}[h!]
\includegraphics[width = 4cm, height = 3cm]{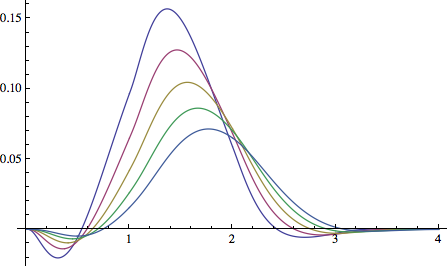}\hspace{2cm}
\includegraphics[width = 4cm, height = 3cm]{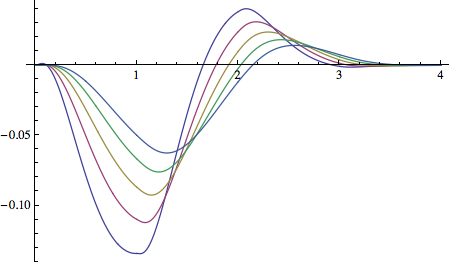}\\
\includegraphics[width = 4cm, height = 3cm]{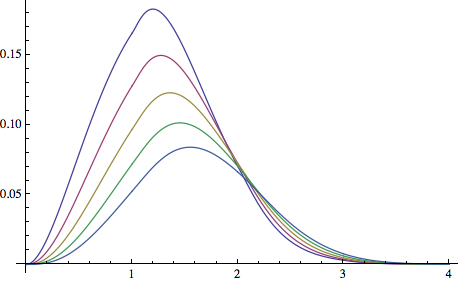}
\caption{The graphs of $\widehat{E^{1}_{z}}$ for $z = 2 + k/4 + i$, $k=0,1,2,3,4$: Real part (upper left), imaginary part (upper right), and modulus (lower middle).}\label{fig2}
\end{figure}

\subsection{Time domain representation}
Next, we derive the time domain representation for a complex exponential B-spline $E^a_z$. For this purpose, we introduce the (backward) exponential difference operator $\nabla_a$ acting of functions $f:\R\to\R$ via
\[
\nabla_a f := f - e^{-a} f(\bullet - 1), \quad a\in \R_0^+.
\] 
For an $n\in \N$, the $n$-fold exponential difference operator is then given by $\nabla_a^n := \nabla_a (\nabla_a^{n-1})$ with $\nabla_a^1 := \nabla_a$. A straightforward calculation yields an explicit formula for $\nabla_a^n$:
\be\label{expdiff}
\nabla_a^n f = \sum_{\ell=0}^\infty \binom{n}{\ell} (-1)^\ell e^{-\ell a} f(\bullet - \ell).
\ee
In the above expression, we replaced the usual upper limit of summation $n$ by $\infty$. This does not alter the value of the sum since for $\ell > n$ the binomial coefficients are identically equal to zero.

Based on the expression \eqref{expdiff}, we define a {\em (backward) exponential difference operator of complex order $\nabla_a^z$} as follows.
\[
\nabla_a^z f := \sum_{\ell=0}^\infty \binom{z}{\ell} (-1)^\ell e^{-\ell a} f(\bullet - \ell), \quad z\in \C_{>1}.
\]

\begin{theorem}
Let $z\in \C_{>1}$. Then the complex exponential B-Spline $E^a_z$ possesses a time domain representation of the form 
\be\label{timerep}
E_a^z (x) = \frac{1}{\Gamma(z)}\,\sum_{\ell=0}^\infty \binom{z}{\ell} (-1)^\ell e^{-\ell a} e_+^{-a(x-\ell)}\,(x-\ell)_+^{z-1},
\ee
where $e_+^{(\bullet)} := \chi_{[0,\infty)}\,e^{(\bullet)}$ and $x_+ := \max\{x,0\}$. The sum converges both point-wise in $\R$ and in the $L^2$--sense.
\end{theorem}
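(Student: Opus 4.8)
The plan is to compute the inverse Fourier transform of the defining expression $\widehat{E_z^a}(\omega) = \Omega(\omega,a)^z$ by transforming, term by term, the absolutely convergent expansion already recorded above,
\[
\Omega(\omega,a)^z = \sum_{\ell=0}^\infty \binom{z}{\ell}(-1)^\ell e^{-\ell a}\,\frac{e^{-i\omega\ell}}{(a+i\omega)^z}.
\]
The single building block I would isolate is $\phi(x) := \frac{1}{\Gamma(z)}\,e_+^{-a x}x_+^{z-1}$, whose Fourier transform is, for $a>0$ and $\Re z > 0$, the Gamma integral
\[
\widehat{\phi}(\omega) = \frac{1}{\Gamma(z)}\int_0^\infty x^{z-1}e^{-(a+i\omega)x}\,dx = \frac{1}{(a+i\omega)^z},
\]
evaluated on the principal branch (legitimate because $a+i\omega$ stays in the open right half-plane, away from the branch cut). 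Writing $\phi_\ell := \phi(\bullet-\ell)$, the translation rule gives $\widehat{\phi_\ell}(\omega) = e^{-i\omega\ell}(a+i\omega)^{-z}$, so that each summand above is precisely the Fourier transform of $\binom{z}{\ell}(-1)^\ell e^{-\ell a}\,\phi_\ell$, the $\ell$-th term of \eqref{timerep}.

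The crux is to justify interchanging the summation with the inverse transform, and I would do this by showing that the series $\sum_\ell c_\ell\phi_\ell$, with $c_\ell := \binom{z}{\ell}(-1)^\ell e^{-\ell a}$, converges absolutely in $L^2(\R)$. Since translation is an $L^2$-isometry, $\|\phi_\ell\|_{L^2} = \|\phi\|_{L^2}$ for every $\ell$, and $\|\phi\|_{L^2} < \infty$ because $\Re z > 1 > \tfrac12$ (equivalently, by Plancherel, $\int_\R (a^2+\omega^2)^{-\Re z}\,d\omega < \infty$, the bounded argument factor in $|(a+i\omega)^{-z}|$ being harmless). It then remains to bound $\sum_\ell |c_\ell| = \sum_\ell \bigl|\binom{z}{\ell}\bigr|\,e^{-\ell a}$; invoking the standard polynomial decay $\bigl|\binom{z}{\ell}\bigr| = \mathcal{O}(\ell^{-\Re z-1})$ of the complex binomial coefficients together with the factor $e^{-\ell a}$ (for $a>0$), this sum converges. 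Hence $\sum_\ell c_\ell\phi_\ell$ converges in $L^2$ to some $S\in L^2(\R)$, and continuity of $\mcF$ on $L^2$ yields $\widehat{S} = \sum_\ell c_\ell\widehat{\phi_\ell}$, where this $L^2$-sum of transforms coincides term by term with the pointwise-absolutely-convergent expansion of $\Omega(\bullet,a)^z = \widehat{E_z^a}$, so the two limits agree a.e. Injectivity of the Fourier transform then gives $S = E_z^a$, which is the asserted $L^2$ identity.

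For the pointwise statement I would exploit that, for each fixed $x$, the truncated powers $(x-\ell)_+^{z-1}$ vanish as soon as $\ell > x$, so the right-hand side of \eqref{timerep} is in fact a finite sum and therefore converges trivially; moreover, since $\Re z > 1$, each term $e_+^{-a(x-\ell)}(x-\ell)_+^{z-1}$ is continuous (including at the knot $x=\ell$), so this finite sum defines a continuous function $\tilde S$. Passing to an a.e.-convergent subsequence of the $L^2$ partial sums shows $\tilde S = S$ almost everywhere; since $E_z^a$ is likewise continuous for $\Re z > 1$ and coincides with $S$ in $L^2$, the two continuous functions $\tilde S$ and $E_z^a$ agree a.e., hence everywhere, upgrading the identity to pointwise equality on $\R$. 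The case $a = 0$ reduces to the time-domain representation \eqref{cb} of the complex B-spline.

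I expect the main obstacle to be the interchange/summability step of the second paragraph: verifying the absolute $L^2$-convergence requires both the finiteness of $\|\phi\|_{L^2}$ (the threshold $\Re z > \tfrac12$) and control of the growth of the complex binomial coefficients, and it is the decay estimate $\bigl|\binom{z}{\ell}\bigr| = \mathcal{O}(\ell^{-\Re z-1})$---rather than any finer property of the Fourier kernel---that carries the argument. The branch-of-power bookkeeping in the Gamma-integral computation is routine but must be stated carefully to keep $(a+i\omega)^{-z}$ single-valued.
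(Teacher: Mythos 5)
Your proposal is correct and follows essentially the same route as the paper: both identify each term of the binomial expansion of $\Omega(\bullet,a)^z$ with the Gamma-integral transform of a shifted copy of $e_+^{-ax}x_+^{z-1}/\Gamma(z)$ and then justify interchanging the sum with the (inverse) Fourier transform. Your justification of that interchange — absolute $L^2$-convergence via $\|\phi_\ell\|_{L^2}=\|\phi\|_{L^2}$ and the decay $\bigl|\binom{z}{\ell}\bigr|=\mathcal{O}(\ell^{-\Re z-1})$, plus the observation that the time-domain sum is locally finite for the pointwise claim — is in fact more explicit than the paper's brief appeal to Fubini--Tonelli and a density argument.
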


\begin{proof}
For $z\in \C_{>1}$ and $a > 0$, we consider the Fourier transform (in the sense of tempered distributions) of the function $\nabla_a^z e_+^{-a x}x_+^{z-1}$. 
\begin{align*}
\frac{1}{\Gamma (z)}\, (\nabla_a^z e_+^{-a x}x_+^{z-1})^\wedge & = \frac{1}{\Gamma (z)}\,\sum_{\ell=0}^\infty \binom{z}{\ell} (-1)^\ell e^{-\ell a}\,\int_\R e_+^{-a (x - \ell)} (x - \ell)_+^{z-1}\,e^{-i\omega x} dx\\
& = \frac{1}{\Gamma (z)}\,\sum_{\ell=0}^\infty \binom{z}{\ell} (-1)^\ell e^{-\ell a}\,\int_\ell^\infty e^{-a (x - \ell)} (x - \ell)^{z-1}\,e^{-i\omega x} dx\\
& = \frac{1}{\Gamma (z)}\,\sum_{\ell=0}^\infty \binom{z}{\ell} (-1)^\ell e^{-\ell a}\,\int_0^\infty e^{-a x} x^{z-1}\,e^{-i\omega (x + \ell)} dx\\
& = \frac{1}{\Gamma (z)}\,\sum_{\ell=0}^\infty \binom{z}{\ell} (-1)^\ell \,\int_0^\infty x^{z-1}\,e^{-(a + i\omega) (x + \ell)} dx,
\end{align*}
where the interchange of sum and integral is allowed by the Fubini--Tonelli Theorem using the fact that the sum over the binomial coefficients is bounded. (See, for instance, \cite{AS} for the asymptotic behavior of the Gamma function.)

Using the substitution $(a+i\omega)x \mapsto t$, the integral on the left becomes the Gamma function up to a multiplicative factor:
\[
\frac{e^{-(a+i\omega)\ell}}{(a+i\omega)^z}\,\Gamma (z).
\]
Thus,
\[
\frac{1}{\Gamma (z)}\, (\nabla_a^z e_+^{-a x}x_+^{z-1})^\wedge = \sum_{\ell=0}^\infty \binom{z}{\ell} (-1)^\ell \,\frac{e^{-(a+i\omega)\ell}}{(a+i\omega)^z} = \Omega(\omega, a)^z.
\]
Employing a standard density argument, we deduce the validity of the above equality for both the $L^1(\R)$-- and $L^2(\R)$--topology.
\end{proof}
It follows directly from the time representations \eqref{cb} and \eqref{timerep} for complex B-splines, respectively, complex exponential B-splines that
\[
|E^a_z (x)| \leq e_+^{-a x} \, |B_z (x)|, \quad\forall\,x\in \R_0^+;\;\forall\,a\in\R_0^+.
\]
Complex exponential B-splines also satisfy a partition of unity property up to a multiplicative constant:
\[
\int_\R E^a_z (x) dx = \widehat{E^a_z}(0) = \left(\frac{1-e^{-a}}{a}\right)^z \neq 0.
\] 
The graphs of some complex exponential B-splines are shown in Figure \ref{fig3}.
\begin{figure}[h!]
\includegraphics[width = 4cm, height = 2.5cm]{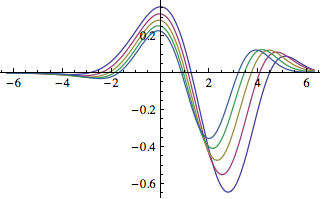}\hspace{2cm}
\includegraphics[width = 4cm, height = 2.5cm]{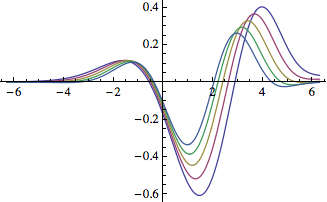}\\
\includegraphics[width = 4cm, height = 2.5cm]{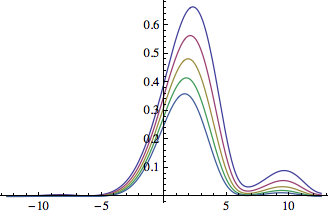}
\caption{The graphs of ${E}^{1.3}_{z}$ for $z=3 + k/4 + i$, $k = 0,1,2,3,4]$: Real part (upper left), imaginary part (upper right), and modulus (lower middle).}\label{fig3}
\end{figure}
\subsection{Multiresolution and Riesz bases}
In this subsection, we investigate multiscale and approximation properties of the complex exponential B-splines. To this end, we consider the relationship between $\widehat{E}^{a}_z (\bullet)$ and $\widehat{E}^{2a}_z (2\,\bullet)$. (The case of real $z > 1$ was first considered in \cite{unserblu05} and then also in \cite{masso}.)

Under the assumptions $z\in \C_{>1}$ and $a > 0$, the following holds:
\begin{align*}
\Omega(2\omega, 2a) & = \frac{1-e^{-(2a+2i\omega)}}{2a+2i\omega} = \frac{(1 + e^{-(a+i\omega)})(1 - e^{-(a+i\omega)})}{2(a+i\omega)}\\
& = \left(\frac{1+e^{-(a+i\omega)}}{2}\right)\,\Omega(\omega,a).
\end{align*}
This then implies that
\[
\widehat{E}^{2a}_z (2\omega) = \left(\frac{1+e^{-(a+i\omega)}}{2}\right)^z\,\widehat{E}^a_z (\omega) =: 2 H_0(\omega, a)\,\widehat{E}^a _z (\omega).
\]
Therefore, the low pass filter $H_0 (\omega,a)$ is given by
\[
H_0(\omega, a) = \frac{1}{2^{z-1}}\,\sum_{k = 0}^\infty \binom{z}{k} e^{- (a+i\omega)k},
\]
from which we immediately derive a two-scale relation between complex exponential B-splines:
\be
E^{2a}_z (x) = \frac{1}{2^{z}}\,\sum_{\ell=0}^\infty \binom{z}{k} e^{-a k}\, E^a_z (2x - k).
\ee

Denote by $T: L^2(\R) \to L^2(\R)$ the unitary translation operator defined by $T f := f(\bullet - 1)$. 
\begin{proposition}
Let $z\in \C_{>1}$ and $a \geq 0$. Then the system $\{T^k E^a_z \st k\in \Z\}$ is a Riesz sequence in $L^2(\R)$.
\end{proposition}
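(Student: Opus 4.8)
The plan is to use the classical Fourier-domain characterization of Riesz sequences of integer translates: the system $\{T^k E^a_z \st k\in\Z\}$ is a Riesz sequence in $L^2(\R)$ if and only if there are constants $0 < A \le B < \infty$ such that the periodization
\[
\Phi_a(\omega) := \sum_{k\in\Z} \bigl|\widehat{E^a_z}(\omega + 2\pi k)\bigr|^2
\]
satisfies $A \le \Phi_a(\omega) \le B$ for almost every $\omega\in\R$. This reduction follows from Plancherel's theorem: writing $\widehat{T^k E^a_z}(\omega) = e^{-ik\omega}\widehat{E^a_z}(\omega)$ and folding the integral modulo $2\pi$, one expresses $\|\sum_k c_k T^k E^a_z\|_2^2$ as $\frac{1}{2\pi}\int_0^{2\pi}|\sum_k c_k e^{-ik\omega}|^2\,\Phi_a(\omega)\,d\omega$, so the Riesz bounds are exactly two-sided bounds on $\Phi_a$. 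The same reduction underlies the Riesz property of complex B-splines established in \cite{forster06}. Thus the whole task is to bound $\Phi_a$ above and below by positive constants.

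For the lower bound I would invoke the lower inequality of Proposition \ref{prop5.4}, which gives the pointwise estimate $|\widehat{E^a_z}(\omega)| \ge e^{-a\Re z/2 - 2\pi|\Im z|}\,|\widehat{B_z}(\omega)|$. Squaring and summing over the lattice $\omega + 2\pi\Z$ yields
\[
\Phi_a(\omega) \ge e^{-a\Re z - 4\pi|\Im z|}\sum_{k\in\Z}|\widehat{B_z}(\omega + 2\pi k)|^2 .
\]
Since complex B-splines already form Riesz sequences \cite{forster06}, their periodization is bounded below by a positive constant $A_0$, and I would set $A := e^{-a\Re z - 4\pi|\Im z|}A_0 > 0$, which covers the whole range $a\ge 0$ (the case $a=0$ being literally $B_z$). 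Alternatively, for $a>0$ one can argue directly from \eqref{eq12}: there $|\Omega(\omega,a)|^2 = 2e^{-a}(\cosh a - \cos\omega)/(a^2+\omega^2) > 0$ for every $\omega$, so $\widehat{E^a_z}$ has no zeros and the single term with $\omega + 2\pi k \in [0,2\pi)$ bounds $\Phi_a$ below by $\min_{[0,2\pi]}|\widehat{E^a_z}|^2 > 0$.

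The upper bound is the step that requires care, and I expect it to be the main obstacle. One cannot simply insert the upper inequality of Proposition \ref{prop5.4} into the sum, because its additive constant $1$ makes $\sum_k 1$ diverge; indeed no multiplicative domination $|\widehat{E^a_z}| \le C|\widehat{B_z}|$ can hold, since $\widehat{B_z}$ vanishes at $\omega = 2\pi k$, $k\neq 0$, while $\widehat{E^a_z}$ does not. Instead I would exploit the genuine decay of $\widehat{E^a_z}$. From \eqref{eq12} the numerator is uniformly bounded, $2e^{-a}(\cosh a - \cos\omega) \le (1+e^{-a})^2 \le 4$, so $|\Omega(\omega,a)| \le 2(a^2+\omega^2)^{-1/2}$; combined with $e^{-\Im z\,\Arg\Omega(\omega,a)} \le e^{2\pi|\Im z|}$, valid because $\Omega(\bullet,a)$ stays off the negative real axis, this gives
\[
|\widehat{E^a_z}(\omega)| = |\Omega(\omega,a)|^{\Re z}\,e^{-\Im z\,\Arg\Omega(\omega,a)} \le 2^{\Re z}\,e^{2\pi|\Im z|}\,(a^2+\omega^2)^{-\Re z/2}.
\]
Squaring and summing, $\Phi_a(\omega) \le 4^{\Re z}e^{4\pi|\Im z|}\sum_{k\in\Z}(a^2+(\omega+2\pi k)^2)^{-\Re z}$, and since $\Re z > 1$ this series converges and is bounded uniformly in $\omega$ (by comparison with $\sum_k (1 + k^2)^{-\Re z}$), producing a finite $B$. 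Combining the two bounds with the characterization finishes the proof; the only delicate points I anticipate are the uniform-in-$\omega$ convergence of the upper series and the appeal to the lower bound for the periodization of $\widehat{B_z}$, for which I would rely on \cite{forster06}.
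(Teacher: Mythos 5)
Your reduction to two-sided bounds on the periodization $\Phi_a(\omega)=\sum_{k}|\widehat{E^a_z}(\omega+2\pi k)|^2$ and your lower bound (squaring the lower inequality of Proposition \ref{prop5.4} and invoking the Riesz property of $\{T^kB_z\}$ from \cite{forster06}) are exactly what the paper does; its entire proof consists of citing that characterization, Proposition \ref{prop5.4}, and \cite[Theorem 9]{forster06}. Where you genuinely depart from the paper is the upper bound, and your departure is well motivated: as you observe, the additive constant $1$ in \eqref{upperbound} cannot be summed over the lattice, and no multiplicative domination $|\widehat{E^a_z}|\le C|\widehat{B_z}|$ is available because $\widehat{B_z}$ vanishes at $2\pi k$, $k\ne 0$, while $\widehat{E^a_z}$ does not (for $a>0$). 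The paper glosses over this; your replacement --- the direct decay estimate $|\Omega(\omega,a)|\le 2(a^2+\omega^2)^{-1/2}$ from \eqref{eq12} together with the argument bound $e^{-\Im z\,\Arg\Omega(\omega,a)}\le e^{2\pi|\Im z|}$, followed by comparison of $\sum_k(a^2+(\omega+2\pi k)^2)^{-\Re z}$ with a convergent series --- is a sound and self-contained repair, and it is essentially the same mechanism by which \cite{forster06} obtains the upper Riesz bound for $B_z$ itself. The one point you flag but should nail down is the uniformity near the lattice point where $\omega+2\pi k$ is close to $0$: for $a=0$ (and more generally for small $a$) the bound $2(a^2+\omega^2)^{-1/2}$ blows up there, so for the at most two indices $k$ with $|\omega+2\pi k|<2\pi$ you should instead use the uniform bound $|\Omega(\omega,a)|\le\min\{1,(1-e^{-a})/a\}\le 1$ already established in the paper; the remaining indices satisfy $|\omega+2\pi k|\ge 2\pi$ and your comparison applies verbatim. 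With that two-line addendum your proof is complete, and arguably more complete than the paper's.
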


\begin{proof}
It suffices to show that there exist constants $0 < A \leq B < \infty$ so that
\[
A \leq \sum_{k\in \Z} \left\vert \widehat{E}^a_z (\omega + 2\pi k)\right\vert^2 \leq B.
\]
To this end, we use the fact that the complex B-splines form a Riesz sequence of $L^2(\R)$ \cite[Theorem 9]{forster06}, and employ Proposition \ref{prop5.4}.
\end{proof}

\begin{corollary}
Suppose that $z\in \C_{>1}$ and $a\geq 0$. Let
\[
V_0^a := \overline{\Span\{T^k E^a_z\st k\in \Z\}}^{L^2(\R)}.
\]
Then $\{T^k E^a_z \st k\in \Z\}$ is a Riesz basis for $V_0^a$.
\end{corollary}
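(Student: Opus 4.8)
The plan is to obtain this corollary as an immediate consequence of the preceding proposition, which asserts that $\{T^k E^a_z : k\in\Z\}$ is a Riesz sequence in $L^2(\R)$, together with the elementary observation that a Riesz sequence is always a Riesz basis for the closure of its linear span. Recall that a family $\{f_k\}_{k\in\Z}$ in a Hilbert space $H$ is a \emph{Riesz basis for $H$} precisely when it is complete in $H$ and there exist constants $0 < A \leq B < \infty$ with $A\sum_k|c_k|^2 \leq \Vert\sum_k c_k f_k\Vert^2 \leq B\sum_k|c_k|^2$ for every finitely supported $(c_k)$; omitting the completeness requirement gives a \emph{Riesz sequence}. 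Thus the corollary is essentially a restatement of the proposition once completeness is accounted for.

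First I would invoke the preceding proposition to record the existence of constants $0 < A \leq B < \infty$ such that
\[
A \leq \sum_{k\in\Z}\bigl|\widehat{E}^a_z(\omega + 2\pi k)\bigr|^2 \leq B \qquad\text{for a.e. } \omega\in\R.
\]
By Plancherel's theorem, periodization of $|\widehat{E}^a_z|^2$, and expansion of a trial sequence into its Fourier series, this two-sided bound on the symbol is equivalent to the frame inequality $A\Vert c\Vert_{\ell^2}^2 \leq \Vert\sum_k c_k T^k E^a_z\Vert_{L^2}^2 \leq B\Vert c\Vert_{\ell^2}^2$ for all $c\in\ell^2(\Z)$, i.e.\ to the Riesz-sequence property. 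This is the only analytic input, and it is already in hand from the proposition.

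It then remains solely to verify completeness, but this holds by construction: since $V_0^a$ is \emph{defined} as $\overline{\Span\{T^k E^a_z : k\in\Z\}}^{L^2(\R)}$, the family is total in $V_0^a$ by definition of the closed span. Combining this completeness with the Riesz bounds above shows that $\{T^k E^a_z : k\in\Z\}$ meets the definition of a Riesz basis for $V_0^a$. I do not expect any genuine obstacle: the substantive estimate has been delegated to the preceding propositions, and the passage from a Riesz sequence to a Riesz basis on the subspace it spans is merely a matter of unwinding definitions, completeness being automatic once attention is restricted to the closure of the span.
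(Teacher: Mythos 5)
Your proposal is correct and matches the paper's (implicit) argument: the corollary is an immediate consequence of the preceding proposition, since a Riesz sequence is by definition a Riesz basis for the closed span of its elements, and completeness in $V_0^a$ holds by construction. The paper offers no separate proof for this corollary precisely because the passage is the definitional unwinding you describe.
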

For $j\in \Z$, we define
\[
V^{2^j a}_j := \overline{\Span\{E^{2^j a}_z (2^j \bullet - k)\st k\in \Z\}}^{L^2(\R)}.
\]
Then
\[
V^{2^j a}_j \subset V^{2^{j+1} a}_{j+1}, \qquad \forall\,j\in \Z.
\]
To establish that the ladder of subspaces $\{V^{2^j a}_j\st j\in \Z\}$ forms a multiresolution analysis of $L^2(\R)$, we use Theorem 2.13 in \cite{Wo}. We have already shown that assumptions (i) (existence of a Riesz basis for $V^a_0$) and (ii) (existence of a two-scale relation) in this theorem hold. The third assumption requires that $\widehat{E^a_z}$ is continuous at the origin and $\widehat{E^a_z}(0) \neq 0$. Both requirements are immediate from \eqref{expspline}. Hence, we arrive at the following result.
\begin{theorem}
Assume that $a\in \R^+_0$ and $z\in \C_{>1}$. Let $\varphi^{2^j a}_{z; j,k} := E^{2^j a}_z (2^j \bullet - k)$. Then the spaces
\[
V^{2^j a}_j := \overline{\Span\{\varphi^{2^j a}_{z; j,k}\st k\in \Z\}}^{L^2(\R)}, \qquad j\in \Z
\]
form a dyadic multiresolution analysis of $L^2(\R)$ with scaling function $\varphi^a_{z;0,0} = E^{a}_z$.
\end{theorem}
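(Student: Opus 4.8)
The plan is to invoke the abstract multiresolution criterion \cite[Theorem 2.13]{Wo}, which reduces the verification of an MRA---and in particular the density of $\bigcup_{j} V^{2^j a}_j$ in $L^2(\R)$ together with the triviality of $\bigcap_{j} V^{2^j a}_j$---to three checkable hypotheses: (i) the integer translates of the level-$0$ generator form a Riesz basis of $V^a_0$; (ii) a two-scale relation connects consecutive levels; and (iii) the Fourier transform of the generator is continuous at the origin and does not vanish there. The feature demanding attention is that the construction is \emph{non-stationary}: the generator at level $j$ is $E^{2^j a}_z$ rather than a fixed dilate of $E^a_z$, so the dyadic refinement must be accompanied by the doubling $a \mapsto 2a$ of the parameter. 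This is exactly what the previously derived identity $\Omega(2\omega,2a) = \tfrac12\bigl(1+e^{-(a+i\omega)}\bigr)\Omega(\omega,a)$ accomplishes.

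First I would dispatch (i): by the preceding corollary, $\{T^k E^a_z \st k\in\Z\}$ is a Riesz basis of $V^a_0$, the proof resting on the Riesz-sequence property of the complex B-splines \cite[Theorem 9]{forster06} together with the two-sided estimate of Proposition \ref{prop5.4}; the identical argument applies at every level to the dilated generators. For (ii) I would use the factorization above to obtain the two-scale relation $\widehat{E}^{2a}_z(2\omega) = \bigl(\tfrac{1+e^{-(a+i\omega)}}{2}\bigr)^z\,\widehat{E}^a_z(\omega)$, a low-pass symbol relating the two levels, which yields, via the standard Fourier-side inclusion argument, the nesting $V^{2^j a}_j \subset V^{2^{j+1}a}_{j+1}$ for all $j\in\Z$. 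Condition (iii) is then read off directly from the defining formula \eqref{expspline}: $\widehat{E^a_z}$ is continuous at $\omega = 0$ and $\widehat{E^a_z}(0) = \bigl(\tfrac{1-e^{-a}}{a}\bigr)^z \neq 0$. With (i)--(iii) verified, \cite[Theorem 2.13]{Wo} delivers the asserted MRA with scaling function $E^a_z$.

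The step I expect to be the main obstacle is not any single estimate but rather justifying that the abstract criterion genuinely applies in the non-stationary regime, i.e.\ that its density and intersection conclusions are insensitive to the scale-dependence of the generator $E^{2^j a}_z$. Here one must note that the Riesz bounds of Proposition \ref{prop5.4} depend on the scale through the factors $e^{-2^j a\,\Re z/2}$ and $2^{\Re z}e^{2\pi|\Im z|}$, so the lower bound, while strictly positive and finite on every finite range of scales (guaranteeing a genuine Riesz basis at each fixed level), degenerates as $j\to+\infty$; likewise $\widehat{E^{2^j a}_z}(0) = \bigl(\tfrac{1-e^{-2^j a}}{2^j a}\bigr)^z \to 0$ in that limit. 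I would therefore check explicitly that the density of the union---which condition (iii) is meant to guarantee---survives this behavior (passing, if needed, to the renormalized generators $E^{2^j a}_z/\widehat{E^{2^j a}_z}(0)$, which span the same spaces $V^{2^j a}_j$), and confirm that the hypotheses of \cite[Theorem 2.13]{Wo} are stated for a family of generators linked by a two-scale relation rather than for a single refinable function. The degenerate case $a = 0$ collapses to the complex B-spline MRA already established in \cite{forster06}. Once this compatibility is confirmed, the theorem applies and the proof is complete.
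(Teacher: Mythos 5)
Your proposal follows exactly the paper's own route: the paper verifies hypotheses (i)--(iii) of \cite[Theorem 2.13]{Wo} --- the Riesz-basis property from the preceding corollary, the two-scale relation obtained from the factorization $\Omega(2\omega,2a)=\tfrac12\bigl(1+e^{-(a+i\omega)}\bigr)\Omega(\omega,a)$, and the continuity and nonvanishing of $\widehat{E^a_z}$ at the origin --- and then concludes. Your additional concern about the non-stationary nature of the ladder (the generator $E^{2^j a}_z$ changes with $j$, the Riesz bounds of Proposition \ref{prop5.4} degenerate as $j\to\infty$, and \cite[Theorem 2.13]{Wo} is stated for a single refinable function) is a legitimate point of rigor that the paper itself passes over in silence, so it is an improvement on, rather than a deviation from, the published argument.
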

Denote the wavelet associated with $E^{a}_z$ by $\theta_z^a$, and the autocorrelation function of $\theta_z^a$ by
\[
R_z^a (\omega) := \sum_{k\in \Z} \vert \widehat{\theta_z^a} (\omega + k)\vert^2.
\]
Then $\widehat{\psi_z^a} := \widehat{\theta_z^a}/\sqrt{R_z^a}$ is an orthonormal wavelet, i.e., $\inn{\widehat{\psi_z^a}}{T_k\widehat{\psi_z^a}}$ $= \delta_{k0}$, $\forall\,k\in \Z$.
\subsection{Connection to fractional differential operators}
Our next goal is to relate complex exponential B-splines to fractional differential operators of type \eqref{diffz} considered in the previous section. For this purpose, we assume throughout this subsection that $a \geq 0$ and $z\in \C_{>1}$. 

As $E_z^a$ is in $L^1_{\loc}$, we see that $E^a_z$ is in the Lizorkin dual $\Psi_+'$. Therefore, $\mcD^z E^a_z$ exists and we can define the following operator. (See also the comment at the end of Section \ref{sec2}.)
\begin{definition}
Let the fractional differential operator $(\mcD + aI)^z:\Psi_+^\prime\to\Psi_+^\prime$ be defined by
\be\label{gendiff}
(\mcD + aI)^z (e^{-a(\bullet)} f) := e^{-a(\bullet)} \mcD^z f.
\ee
\end{definition}
The next results show that the fractional differential operator \eqref{gendiff} satisfies properties similar to those of the associated differential operator of positive integer order.
\begin{proposition}
For the fractional differential operator $(\mcD + aI)^z$ defined in \eqref{gendiff}, the following statements hold. 
\begin{itemize}
\item[(i)] As $f\equiv 1\in \Psi_+^\prime$, the function $e^{-a(\bullet)}\in \ker (\mcD + aI)^z$.
\item[(ii)] The complex monomials $(\bullet)^{z-1}$ are in $\Psi_+^\prime$ implying that $(\bullet)^{z-1}\,e^{-a(\bullet)}\in \ker (\mcD + aI)^z$.
\end{itemize}
Moreover,
\be\label{diffE}
(\mcD + aI)^z E_z^a = \sum_{\ell=0}^\infty \left[\binom{z}{\ell} (-1)^\ell e^{-\ell a}\right] \delta (\bullet - \ell).
\ee
\end{proposition}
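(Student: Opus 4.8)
The plan is to handle the two kernel statements (i), (ii) and the main formula \eqref{diffE} separately, in each case reducing everything to the defining identity \eqref{gendiff} together with facts already recorded in Section~\ref{sec4}.

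For parts (i) and (ii) I would apply \eqref{gendiff} directly. For (i), taking $f\equiv 1$ gives $(\mcD+aI)^z(e^{-a(\bullet)}\cdot 1) = e^{-a(\bullet)}\mcD^z 1$, and since $n=\lceil\Re z\rceil\ge 1$ we have $D^n 1 = 0$, so by \eqref{diffz} $\mcD^z 1 = (D^n 1)*K_{n-z} = 0$; hence $e^{-a(\bullet)}\in\ker(\mcD+aI)^z$. For (ii), taking $f=(\bullet)^{z-1}$ and using \eqref{dertruncpow} with $k=0$ gives $\mcD^z[(\bullet)^{z-1}] = \Gamma(z)\,\delta$, so that $(\mcD+aI)^z\big((\bullet)^{z-1}e^{-a(\bullet)}\big) = e^{-a(\bullet)}\Gamma(z)\delta = \Gamma(z)\delta$. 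The point I would stress is that $\delta$ is the \emph{zero} functional on $\Psi_+$: every $\varphi\in\Psi_+$ is smooth with $\supp\varphi\subseteq[0,\infty)$, which forces $D^m\varphi(0)=0$ for all $m\ge 0$, in particular $\langle\delta,\varphi\rangle=\varphi(0)=0$. Thus $(\bullet)^{z-1}e^{-a(\bullet)}\in\ker(\mcD+aI)^z$ as claimed.

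For the main formula the key observation, which I would establish first, is the factorization $E^a_z = e^{-a(\bullet)}B_z$. This is immediate from the time-domain representation \eqref{timerep}: for $x>\ell$ one has $e^{-\ell a}\,e_+^{-a(x-\ell)}(x-\ell)_+^{z-1} = e^{-ax}(x-\ell)_+^{z-1}$, so the factor $e^{-ax}$ pulls out of every summand and the remaining series is exactly \eqref{cb}, i.e.\ $B_z$. (As a check, in the Fourier domain multiplication by $e^{-a(\bullet)}$ corresponds to the shift $\omega\mapsto\omega-ia$, under which $\widehat{B_z}(\omega)=\Omega(\omega)^z$ becomes $\Omega(\omega,a)^z=\widehat{E^a_z}(\omega)$.) With this factorization in hand I would apply \eqref{gendiff} with $f=B_z\in\Psi_+'$, obtaining $(\mcD+aI)^z E^a_z = e^{-a(\bullet)}\mcD^z B_z$. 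Since $B_z$ is a spline of complex order solving \eqref{cs} with $a_k=(-1)^k\binom{z}{k}$, as recorded at the end of Section~\ref{sec4}, we have $\mcD^z B_z = \sum_{k=0}^\infty (-1)^k\binom{z}{k}\,\delta(\bullet-k)$. Multiplying termwise by $e^{-a(\bullet)}$ and using that multiplication of $\delta(\bullet-k)$ by a smooth function samples it at $k$, namely $e^{-a(\bullet)}\delta(\bullet-k)=e^{-ak}\delta(\bullet-k)$, yields precisely \eqref{diffE}.

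The step I expect to require the most care is the distributional bookkeeping, not any hard estimate. Two points need justification. First, that $(\mcD+aI)^z$ is legitimately applied to $E^a_z$ through the \emph{specific} factor $f=B_z$ supplied by \eqref{timerep}; this sidesteps the well-definedness issue that $e^{a(\bullet)}T$ need not be tempered for a general $T\in\Psi_+'$, since here $e^{-a(\bullet)}\varphi$ stays in $\Psi_+$ and all pairings make sense. Second, that $(\mcD+aI)^z$ and the multiplication by $e^{-a(\bullet)}$ may be carried out termwise on $\sum_{k}(-1)^k\binom{z}{k}\delta(\bullet-k)$, which is valid because multiplication by the smooth function $e^{-a(\bullet)}$ is continuous on $\Psi_+'$ and the series converges there. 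Everything else is the algebra above, with the $\ell=0$ term of \eqref{diffE} being consistent with part (ii) since $\delta$ vanishes on $\Psi_+$.
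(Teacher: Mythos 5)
Your proof is correct and follows essentially the same route as the paper's: parts (i) and (ii) via $\mcD^z 1 = 0$ and $\mcD^z (\bullet)^{z-1} = \Gamma(z)\,\delta$, which vanishes on $\Psi_+$ because every $\varphi\in\Psi_+$ satisfies $\varphi(0)=0$; and \eqref{diffE} via the definition \eqref{gendiff} combined with the fact that $B_z$ solves \eqref{cs}. The only difference is that you make explicit the factorization $E^a_z = e^{-a(\bullet)}B_z$ and the distributional bookkeeping (well-definedness of the pairings, termwise multiplication of the delta series), which the paper's two-sentence proof leaves implicit.
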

\begin{proof}
In order to verify statements (i) and (ii), note that $\inn{\mcD^z 1}{\varphi} = 0$ and $\inn{\mcD^z (\bullet)^{z-1}}{\varphi} = \Gamma (z) \inn{\delta}{\varphi} = \Gamma (z) \varphi(0) = 0$, for all $\varphi\in\Psi_+$. The conclusions now follow from \eqref{gendiff}.
 
Equation \eqref{diffE} is a consequence of (i), (ii), as well as definition \eqref{gendiff} of the operator $(\mcD + aI)^z$, and the fact that $B_z$ satisfies \eqref{cs}.
\end{proof}

Equation \eqref{diffE} suggests a more general definition of exponential spline of complex order.
\begin{definition}
An {\em exponential spline of complex order $z\in \C_{>1}$ corresponding to $a\in \R^+$} is any solution of the fractional differential equation
\be\label{Leq}
(\mcD + a I)^{z} f = \sum_{\ell=0}^\infty c_\ell \,\delta (\bullet -\ell),
\ee
for some $\ell^\infty$-sequence $\{c_\ell\st \ell\in \N\}$.
\end{definition}
Clearly, the complex exponential spline $E^a_z$ is a nontrivial solution of \eqref{Leq}. The coefficients $c_\ell$ are bounded since
\[
\left\vert \sum_{\ell=0}^\infty c_\ell\right\vert = \left\vert\sum_{\ell=0}^\infty\binom{z}{\ell} (-1)^\ell e^{-\ell a}\right\vert \leq \left\vert\sum_{\ell=0}^\infty\binom{z}{\ell}\right\vert \leq c\, e^{|z-1|},
\]
for some constant $c>0$. (See the proof of Theorem 3 in \cite{forster06} for the final inequality.)
\subsection{Generalities}
So far, we have only considered complex exponential B-splines that depend on one parameter $a\in \R^+$. Now we will briefly look at the more general setting based on \eqref{regE}. To this end, let $\boldsymbol{a}:=(a_1, \ldots, a_n)\in (\R_0^+)^n$  be an $n$-tuple of parameters with at least one $a_j\neq 0$.

Let $\bfz:= (z_1, \ldots, z_n)\in \C^n_{> 1} := \underset{j = 1}{\overset{n}{\times}} \C_{>1}$ and define
\[
E_{\bfz}^{\boldsymbol{a}} := \underset{j = 1}{\overset{n}{*}} E_{z_j}^{a_j},
\]
or, equivalently,
\[
\widehat{E_{\bfz}^{\boldsymbol{a}}} := \prod_{j=1}^n \left( \frac{1-e^{-(a_j+i\omega)}}{a_j+i\omega}\right)^{z_j}.
\]
As above, we have that
\[
\int_\R E^{\bfa}_{\bfz} (x) dx = \prod_{j=1}^n \left( \frac{1-e^{-a_j}}{a_j}\right)^{z_j} \neq 0.
\]

For illustrative purposes, let us consider the case $n := 2$, and set $a := a_1$, $b:= a_2$, $z:= z_1$ and $\zeta := z_2$. Using the time domain representation of $E_z^a$ and $E_\zeta^b$, we can compute the time domain representation of the complex exponential B-spline $E_{(z,\zeta)}^{(a,b)}$. The result suggests that there are connections to the theory of special functions. 

By Mertens' Theorem \cite{hardy}, we can write the double product $E_z^a (y) E_\zeta^b (x-y)$ in the following form:
\begin{align*}
E_z^a (y) E_\zeta^b (x-y) &= \frac{1}{\Gamma (z)\Gamma (\zeta)} \sum_{k=0}^\infty \sum_{\ell=0}^k \binom{z}{\ell} (-1)^\ell e^{-\ell a} e_+^{-a (y - \ell)} (y-\ell)_+^{z-1}\\
& \qquad \times \binom{\zeta}{k-\ell} (-1)^{k-\ell} e^{-(k-\ell) b} e_+^{-b (x-y - (k-\ell))} (x-y-(k-\ell))_+^{\zeta-1} \\
& = \frac{1}{\Gamma (z)\Gamma (\zeta)} \sum_{k=0}^\infty \sum_{\ell=0}^k \binom{z}{\ell} \binom{\zeta}{k-\ell} (-1)^k e^{-\ell a - (k-\ell)b} e_+^{-a (y - \ell)} e_+^{-a (y - \ell)}\\
& \qquad \times (y-\ell)_+^{z-1} [(x-k) - (y -\ell)]_+^{\zeta-1}
\end{align*}
Thus,
\begin{align*}
E_{(z,\zeta)}^{(a,b)} (x) &= (E_z^a * E_\zeta^b)(x) = \int_\R E_z^a (y) E_\zeta^b (x-y) dy \\
& = \left(\Sigma\right)\, \int_\R H(y - \ell) e^{-a(y - \ell)} H((x-k)-(y-\ell)) e^{-b((x-k)-(y-\ell))}\\
& \qquad \times (y-\ell)_+^{z-1} [(x-k) - (y -\ell)]_+^{\zeta-1} dy.
\end{align*}
Here, we put all non-variable quantities into the expression $(\Sigma)$ and used the Heaviside function $H$.

Recognizing that $x - k \geq y - \ell \geq 0$ holds, the integral in the last line above, can be written as
\begin{align*}
& = \int_\ell^{x-k+\ell} e^{-a(y-\ell)} e^{b(y - \ell)} e^{-b(x-k)} (y-\ell)^{z-a}  [(x-k) - (y -\ell)]^{\zeta-1} dy\\
& = e^{-b(x-k)} \int_\ell^{x-k+\ell} e^{(a-b)(y-\ell)} (y-\ell)^{z-a}  [(x-k) - (y -\ell)]^{\zeta-1} dy\\
& = e^{-b(x-k)} \int_0^{x-k} e^{-(a-b)\eta} \eta^{z-1}[(x-k) - \eta]^{\zeta-1} d\eta,
\end{align*}
where we used the substitution $y - \ell \mapsto \eta$. After the substitution $\eta\mapsto \tau/(x-k)$, the last integral becomes the integral representation of Kummer's confluent hypergeometric $M$--function \cite{AS}:
\begin{align*}
\int_0^{x-k} e^{-(a-b)\eta} \eta^{z-1} &[(x-k) - \eta]^{\zeta-1} d\eta = \\
& (x-k)^{z + \zeta -1} \frac{\Gamma(z)\Gamma(\zeta)}{\Gamma (z+\zeta)}\,M(z, z+\zeta; -(a-b)(x-k)).
\end{align*}
Combining all terms, we arrive at an explicit formula for $E_{(z,\zeta)}^{(a,b)}$:
\begin{align}\label{mess}
E_{(z,\zeta)}^{(a,b)} (x) &= \frac{1}{\Gamma (z+\zeta)} \sum_{k=0}^\infty\,\left[ \sum_{\ell=0}^k \binom{z}{\ell} \binom{\zeta}{k-\ell}\,e^{-\ell(a-b)}\right] (-1)^k\,e^{-b x}\nonumber\\
& \qquad\times M(z, z+\zeta; -(a-b)(x-k))\,(x-k)^{z+\zeta-1}.
\end{align}
Realizing that the expression in brackets is equal to \cite{AS}
\[
\binom{\zeta}{k} {}_2F_1 (-k, -z,1-k+\zeta; e^{-(a-b)}),
\]
we may write \eqref{mess} also as
\begin{align*}
E_{(z,\zeta)}^{(a,b)} (x) &= \frac{1}{\Gamma (z+\zeta)} \sum_{k=0}^\infty\,\binom{\zeta}{k} (-1)^k\,e^{-b x}\, {}_2F_1 (-k, -z,1-k+\zeta; e^{-(a-b)})\\
& \qquad\qquad\times M(z, z+\zeta; -(a-b)(x-k))\,(x-k)^{z+\zeta-1}.
\end{align*}
Notice that the above equation is a sampling procedure involving Kummer's $M$-function (and Gau{\ss}'s ${}_2 F_1$ hypergeometric function).
\section{Summary and Further Work}
We extended the concept of exponential B-spline to complex orders $z\in \C_{>1}$. This extension contains as a special case the class of polynomial splines of complex order. The new class of complex exponential B-splines generates multiresolution analyses of and wavelet bases for $L^2(\R)$, and relates to fractional differential operators defined on Lizorkin spaces and their duals. 

Explicit formulas for the time domain representation of complex exponential B-spline depending on one parameter and two parameters were derived. In the latter case, there seem to be connection to the theory of special functions as the Kummer $M$-function appears in the representation.

An approximation-theoretic investigation of complex exponential B-splines for several parameters needs to be initiated and numerical schemes for the associated approximation spaces developed. Connections to fractional differential operators of the form 
\[
\mathcal{L}^{\bfz}_{\bfa} := \prod_{i=1}^n (\mcD + a_i I)^{z_i},
\]
where $\bfa = (a_1, \ldots, a_n)\in (\R^+)^n$ and $\bfz = (z_1, \ldots, z_n)\in \C_{>1}^n$, need to be established. Moreover, the time domain representation for complex exponential B-splines depending on an $n$-tuple $\bfa\in (\R^+)^n$ of parameters has to be derived. Finally, the relation to special functions is worthwhile an investigation.
\bibliographystyle{amsalpha}
\bibliography{exponential_complex}

\providecommand{\bysame}{\leavevmode\hbox to3em{\hrulefill}\thinspace}
\providecommand{\MR}{\relax\ifhmode\unskip\space\fi MR }
\providecommand{\MRhref}[2]{%
  \href{http://www.ams.org/mathscinet-getitem?mr=#1}{#2}
}
\providecommand{\href}[2]{#2}
\begin{thebibliography}{KMPS76}

\bibitem[ADM91]{ammar}
G.~Ammar, W.~Dayawansa, and C.~Martin, \emph{Exponential interpolation theory:
  Theory and numerical algorithms}, Appl. Math. Comput. \textbf{41} (1991),
  189--232.

\bibitem[AS65]{AS}
Milton Abramowitz and Irene~A. Stegun, \emph{Handbook of {M}athematical
  {F}unctions: with {F}ormulas, {G}raphs, and {M}athematical {T}ables}, Dover
  Publications, Inc., New York, 1965.

\bibitem[CG13]{CS}
O.~Christensen and S.~Goh, \emph{From dual pairs of {G}abor frames to dual
  pairs of wavelet frames and vice versa}, Appl. Comp. Harmon. Anal. \textbf{to
  appear} (2013).

\bibitem[CM12]{CM}
O.~Christensen and P.~Massopust, \emph{Exponential {B}-splines and the
  partition of unity property}, Adv. Comput. Math. \textbf{37} (2012), no.~2,
  301--318.

\bibitem[CS47]{curryschoenberg}
H.~B. Curry and I.~J. Schoenberg, \emph{On spline distributions and their
  limits: {T}he {P\' olya} distribution functions}, Bulletin of the AMS
  \textbf{53} (1947), no.~7--12, 1114, Abstract.

\bibitem[DL00]{dantray}
R.~Dantray and J.-L. Lions, \emph{Mathematical {A}nalysis and {N}umerical
  {M}ethods for {S}cience and {T}echnology, {V}ol. 2}, Springer Verlag, Berlin,
  Germany, 2000.

\bibitem[DM87]{dm1}
W.~Dahmen and C.~A. Micchelli, \emph{On the theory and applications of
  exponential splines}, Topics in Multivariate Approximation (C.~K. Chui, L.~L.
  Schumaker, and F.~I. Utreras, eds.), Academic Press, Boston, 1987,
  pp.~37--46.

\bibitem[DM89]{dm2}
\bysame, \emph{On multivariate e-splines}, Adv. in Math. \textbf{76} (1989),
  33--93.

\bibitem[FBU06]{forster06}
B.~Forster, T.~Blu, and M.~Unser, \emph{Complex {B}-splines}, Appl. Comp.
  Harmon. Anal. \textbf{20} (2006), 281--282.

\bibitem[FM08]{hassip}
B.~Forster and P.~Massopust, \emph{{Some remarks about the connection between
  fractional divided differences, fractional B-Splines, and the
  Hermite-Genocchi formula}}, International Journal of Wavelets,
  Multiresolution and Information Processing \textbf{6} (2008), no.~2,
  279--290.

\bibitem[FM09a]{brigittesampta}
\bysame, \emph{Multivariate complex {B}-splines, {Dirichlet} averages and
  difference operators}, Proceedings of {S}amp{TA}, 2009.

\bibitem[FM09b]{statisticalencounters}
\bysame, \emph{{Statistical encounters with complex B-Splines}}, Constructive
  Approximation \textbf{29} (2009), no.~3, 325--344.

\bibitem[FM11]{forstermasso10}
\bysame, \emph{Splines of complex order: {F}ourier, filters and fractional
  derivatives}, {S}ampling {T}heory in {S}ignal and {I}mage {A}nalysis
  \textbf{10} (2011), no.~1--2, 89--109.

\bibitem[FM{\"U}12]{FMUe}
B.~Forster, P.~Massopust, and T.~{\"U}belacker, \emph{Periodic splines of
  complex order}, Numerical Functional Analysis and Optimization \textbf{33}
  (2012), no.~7--9, 989--1004.

\bibitem[FsMS13]{FGMS}
B.~Forster, R.~Garunk\v stis, P.~Massopust, and J.~Steuding, \emph{Complex
  {B}-splines and {H}urwitz {Z}eta functions}, London Math. Soc. Journal of
  Computation and Mathematics \textbf{16} (2013), 61--77.

\bibitem[GS59]{gelfand}
I.~Gel'fand and G.~Shilov, \emph{Generalized {F}unctions, {V}ol. 1 (in
  {R}ussian)}, Nauka, Moscow, Russia, 1959.

\bibitem[Har49]{hardy}
G.~H. Hardy, \emph{Divergent {S}eries}, Oxford Clarendon Press, 1949.

\bibitem[KMPS76]{karlin}
S.~Karlin, C.~Micchelli, A.~Pinkus, and I.~Schoenberg, \emph{Studies in
  {S}pline {F}unctions and {A}pproximation {T}heory}, Academic Press, New York,
  1976.

\bibitem[LY11]{LY}
Y.~J. Lee and J.~Yoon, \emph{Analysis of compactly supported non-stationary
  biorthogonal wavelet systems based on exponential b-splines}, Abstr. Appl.
  Anal. \textbf{Art. ID 593436} (2011), 17 pp.

\bibitem[Mas09]{petersampta}
P.~Massopust, \emph{Double {Dirichlet} averages and complex {B}-splines},
  Proceedings of {S}amp{TA}, 2009.

\bibitem[Mas10]{masso}
\bysame, \emph{Interpolation and {A}pproximation with {S}plines and
  {F}ractals}, Oxford University Press, New York, 2010.

\bibitem[Mas12]{Mmoments}
\bysame, \emph{Moments of complex {B}-splines}, Commun. Math. Anal. \textbf{12}
  (2012), no.~2, 58--70.

\bibitem[McC91]{mccartin}
B.~J. McCartin, \emph{Theory of exponential splines}, J. Approx. Th.
  \textbf{66} (1991), 1--23.

\bibitem[MF07]{spie}
P.~Massopust and B.~Forster, \emph{Multivariate complex {B}-splines},
  Proceedings of the SPIE, Wavelets XII, 2007.

\bibitem[MF10]{multivarSplines}
\bysame, \emph{Multivariate complex {B}-splines and {Dirichlet} averages},
  Journal of Approximation Theory \textbf{162} (2010), 252--269.

\bibitem[Mic76]{mic}
C.~A. Micchelli, \emph{Cardinal $\mathcal{L}$-{S}plines}, Studies in Spline
  Functions and Approximation Theory (S.~Karlin, C.~Micchelli, A.~Pinkus, and
  I.~Schoenberg, eds.), Academic Press, New York, 1976, pp.~203--250.

\bibitem[Pod99]{podlubny}
I.~Podlubny, \emph{Fractional {D}ifferential {E}quations}, Academic Press,
  1999.

\bibitem[SKM87]{samko}
S.~G. Samko, A.~A. Kilbas, and O.~I. Marichev, \emph{Fractional {I}ntegrals and
  {D}erivatives}, Gordon and Breach Science Publishers, Minsk, Belarus, 1987.

\bibitem[Spa69]{spaeth}
H.~Spaeth, \emph{Exponential spline interpolation}, Computing \textbf{4}
  (1969), 225--233.

\bibitem[SU86]{sakai1}
M.~Sakai and R.~A. Usmani, \emph{On exponential {B}-splines}, J. Approx. Th.
  \textbf{47} (1986), 122--131.

\bibitem[SU89]{sakai2}
\bysame, \emph{A class of simple exponential {B}-splines and their application
  to numerical solution to singular perturbation problems}, Numer. Math.
  \textbf{55} (1989), 493--500.

\bibitem[UB00]{unserblu00}
M.~Unser and T.~Blu, \emph{Fractional splines and wavelets}, {SIAM} Review
  \textbf{42} (2000), no.~1, 43--67.

\bibitem[UB05]{unserblu05}
\bysame, \emph{Cardinal exponential splines: {P}art {I} -- theory and filtering
  algorithms}, {IEEE} Trans. Signal Processing \textbf{53} (2005), no.~4,
  1425--1438.

\bibitem[Wes74]{westphal}
U.~Westphal, \emph{An approach to fractional powers of operators via fractional
  differences}, Proc. Lond. Math. Soc. \textbf{29} (1974), no.~3, 557--576.

\bibitem[Woj97]{Wo}
P.~Wojtaszczyk, \emph{A {M}athematical {I}ntroduction to {W}avelets}, Cambridge
  University Press, 1997.

\bibitem[Zem87]{zemanian}
A.~H. Zemanian, \emph{Distribution {T}heory and {T}ransform {A}nalysis -- {A}n
  {I}ntroduction to {G}eneralized {F}unctions, with {A}pplications}, Dover
  Publications, Inc., New York, 1987.

\bibitem[Zhe82]{zheludev}
V.~A. Zheludev, \emph{Fractional-order derivatives and the numerical solution
  of certain convolution equations}, Differential Equations \textbf{18} (1982),
  1404--1413.

\bibitem[ZRR00]{zoppou}
C.~Zoppou, S.~Roberts, and R.~J. Renka, \emph{Exponential spline interpolation
  in characteristic based scheme for solving the advective--diffusion
  equation}, Int. J. Numer. Meth. Fluids \textbf{33} (2000), 429--452.

\end{thebibliography}

\end{document}